\numberwithin{equation}{section}
\theoremstyle{plain}
\newtheorem{thm}{Theorem}[section]
\newtheorem{lem}{Lemma}[section]
\theoremstyle{definition}
\newtheorem{definition}{Definition}[section]
\newtheorem{assumption}{Assumption}[section]
\numberwithin{equation}{section}
\begin{document}

\begin{frontmatter}
\title{Posterior Concentration Properties of a General Class of Shrinkage Priors around Nearly Black Vectors}
\runtitle{Posterior Contraction Of Shrinkage Priors}

\begin{aug}
\author{\fnms{Prasenjit} \snm{Ghosh}\ead[label=e1]{prasenjit$\_$r@isical.ac.in}}
\and
\author{\fnms{Arijit} \snm{Chakrabarti}\ead[label=e2]{arc@isical.ac.in}}

\runauthor{P. Ghosh and A. Chakrabarti}

\affiliation{Indian Statistical Institute}

\address{Applied Statistics Unit, Indian Statistical Institute, Kolkata, India.\\
\printead{e1},
\printead*{e2}}

\end{aug}

\begin{abstract}
We consider the problem of estimating a high-dimensional multivariate normal mean vector when it is sparse in the sense of being nearly black. Optimality of Bayes estimates and posterior concentration properties corresponding to a very general class of continuous shrinkage priors on the unknown mean vector are studied in this work. The class of priors considered is rich enough to include a wide variety of heavy tailed prior distributions including the horseshoe which are in extensive use in sparse high-dimensional problems. In particular, the three parameter beta normal mixtures, the generalized double Pareto priors, the inverse gamma priors and the normal-exponential-gamma priors fall inside this class. We work under the frequentist setting where the data is generated according to a multivariate normal distribution with a fixed unknown mean vector. Under the assumption that the number of non-zero components of the mean vector is known, we show that the Bayes estimators corresponding to this general class of priors attain the minimax risk (possibly up to a multiplicative constant) corresponding to the $l_2$ loss. It is further shown that the posterior distributions arising out of this general class of one-group priors contract around the underlying true mean vector at the minimax optimal rate for appropriate choice of the global shrinkage parameter depending on the proportion of non-zero components of the underlying mean vector. Moreover, we provide conditions on the choice of the global shrinkage parameter for which these posterior distributions contract around the corresponding Bayes estimates at least as fast as the minimax risk with respect to the $l_2$ norm. We also provide a lower bound to the total posterior variance for an important subclass of this general class of shrinkage priors that includes the generalized double Pareto priors with shape parameter $\alpha=1$ and the three parameter beta normal mixtures with parameters $a=0.5$ and $b>0$ (including the horseshoe) in particular. The present work is inspired by the recent work of \citet{PKV2014} on the posterior contraction properties of the horseshoe prior under the present set-up. We extend their results for this general class of priors and come up with novel unifying proofs which work for a very broad class of one-group continuous shrinkage priors. This work shows that the general scheme of arguments in \citet{PKV2014} can be used in greater generality.
\end{abstract}

%
\end{frontmatter}

\section{Introduction}
With rapid advancements in modern technology and computing facilities, high throughput data have become common place in real life problems across diverse scientific fields such as genomics, biology, medicine, cosmology, finance, economics and climate studies. As a result inferential problems involving a large number of unknown parameters are coming to the fore. Problems where the number of unknown parameters grows as least as fast as the number of observations are typically called high-dimensional. In such problems, often times it is also true that only a few of these parameters are of real importance. For example, in a high dimensional regression problem, it is often true that the proportion of non-zero regressors or regressors with large magnitude is quite small compared to the total number of candidate regressors. This is called the phenomenon of sparsity. A common Bayesian approach to model sparse high-dimensional data is to use a two-component point mass mixture prior for the parameters and they put a positive mass at zero (to induce sparsity) and a heavy tailed continuous distribution (to identify the non-zero coefficients). These are also referred to as ``spike and slab priors'' or two-groups priors. This is a very natural way of modeling data of this kind from a Bayesian view point. See \citet{JS2004} and \citet{Efron2004} in this context.\newline

Use of the two-groups prior, although very natural, poses a very daunting task computationally. Note that the cardinality of the model space becomes $2^n$ where $n$ is the number of parameters involved, and even for moderately large $n$ like 50, it is practically impossible to study posterior probabilities of the different models. Sometimes it is also possible that most of the parameters are very close to zero, but not exactly equal to zero. So in such a case a continuous prior may be able to capture sparsity in a more flexible manner. Due to these reasons, significant efforts have gone into modeling sparse high-dimensional data in recent times through hierarchical one-group continuous priors, which are also called one-group shrinkage priors. Bayesian analysis is computationally much more tractable than the two-group prior in such cases and easily implementable through standard MCMC techniques. But more importantly, these priors are suitable to capture sparsity since they accord a significant chunk of probability around zero while they have tails which are heavy enough to ensure a priori large probabilities for large parameter values. In general, such priors are expressed as multivariate scale-mixtures of normals that mix over two levels of parameters appearing in the scale, referred to as a ``global'' shrinkage parameter and a ``local'' shrinkage parameter. While the global shrinkage parameter accounts for the overall sparsity in the data by shrinking the noise observations to the origin, the local shrinkage parameters are helpful in detecting the obvious signals by leaving the large observations mostly unshrunk.\newline

A great variety of one-group shrinkage priors have appeared in the literature over the years. Notable early examples are the $t$-prior in \citet{Tipping2001}, the double-exponential prior in \citet{PC2008} and \citet{Hans2009} and the normal-exponential-gamma priors in \citet{GB2005}. Very recently \citet{CPS2009, CPS2010} introduced the horseshoe prior, which has very appealing properties. Subsequently, many other one-group priors have been proposed in the literature, e.g, in \citet{PS2011, PS2012}, \citet{ADC2011}, \citet{ADL2012} and \citet{GB2010, GB2012, GB2013}. The class of ``three parameter beta normal'' mixture priors was introduced in \citet{ADC2011}, while the ``generalized double Pareto'' class of priors was introduced by \citet{ADL2012}. The three parameter beta normal mixture family of priors encompasses among others the horseshoe, the Strawderman-Berger and the normal-exponential-gamma priors. Very recently, a different class of one-group priors named Dirichlet-Laplace (DL) priors have been introduced in \citet{BPPD2014}. They investigated its various theoretical properties and demonstrated its good performances through extensive simulations.\newline

As commented in \citet{CV2012}, the Bayesian approach to sparsity is not driven by the ultimate goal of producing estimators that attain the minimax rate or for that matter posterior distributions with rate of contraction same as the minimax rate. However, for theoretical investigations, minimax rate can be taken as a benchmark and this is a motivation to study this kind of optimality properties for the Bayesian approach to sparsity. In an important article, \citet{JS2004} focused on the case where a two-groups prior is used to model the mean parameters. They showed that if the unknown proportion of non-zero means is estimated by marginal maximum likelihood and a coordinate-wise posterior median estimate is used, the resulting estimator attains the minimax rate with respect to $l_q$ loss, $q \in (0, 2]$. In \citet{CV2012}, the full Bayes approach was studied where they found conditions on the two-groups prior that ensure contraction of the posterior distribution at the minimax rate.\newline

In recent times, researchers have started to investigate various theoretical properties of Bayes estimates and testing rules based on one-group continuous shrinkage priors. Amongst various such one-group priors, the horseshoe prior has acquired a prominent place in the Bayesian literature and it has been used extensively in inferential problems involving sparsity. \citet{CPS2010} have theoretically showed good performance of the horseshoe estimator (the Bayes estimate corresponding to the horseshoe prior) in terms of the Kullbuck-Leibler risk when the true mean is zero. \citet{DG2013} showed a near oracle optimality property of multiple testing rules based on the horseshoe estimator in the context of multiple testing. \citet{GTGC2015} extended their work by theoretically showing that the multiple testing rules based on a general class of tail robust shrinkage priors enjoy similar optimality properties as the horseshoe. They also considered an empirical Bayes approach by plugging in an estimate of the global variance component into the definition of such multiple testing rules, and showed that the resulting empirical Bayes procedure enjoys similar optimality property. This general class of shrinkage priors is rich enough to include among others, the three parameter beta normal priors, the generalized double Pareto priors, the inverse gamma priors, and the normal-exponential-gamma priors, the horseshoe prior and the Strawderman-Berger prior, in particular.\newline

In an important recent article, \citet{PKV2014} showed that for the problem of estimation of a sparse normal mean vector, the horseshoe estimator asymptotically achieves the minimax risk with respect to the $l_2$ loss, possibly up to a multiplicative constant and the corresponding posterior distribution contracts around the true mean vector at this rate. Moreover, they showed that the corresponding posterior distribution contracts at least as fast as the minimax rate around the posterior mean. This was shown assuming that the number of non-zero means is known and the global shrinkage parameter tends to zero at an appropriate rate as the dimension grows to infinity. They also provide conditions under which the horseshoe estimator combined with an empirical Bayes estimate of the global variance component still attains the minimax quadratic risk even when the number of non-zero means is unknown. In a beautiful recent article, \citet{BPPD2014} showed that for the estimation of a sparse multivariate normal mean vector, under the quadratic risk function, the posterior arising from the Dirichlet-Laplace prior attains a minimax optimal rate of posterior contraction, that is, the corresponding posterior distribution contracts at the minimax rate for an appropriate choice of the underlying Dirichlet concentration parameter. See also \citet{BRT2009} for the minimax risk properties of the Lasso estimator which is the least squares estimator of the regression coefficients with an $L_1$ constraint on the regression coefficients. It was later shown by \citet{CSHV2014} that the corresponding entire posterior distribution contracts at a much slower rate, thus indicating an inadequate measure of uncertainty in the estimate.\newline

A natural question to ask, and also posed in section 6 of \citet{PKV2014}, is what aspects of the shrinkage priors are essential towards obtaining optimal posterior concentration properties as obtained for the case of the horseshoe prior. As mentioned earlier, in the context of simultaneous testing of a large number of independent normal means, \citet{GTGC2015} considered a general class of heavy-tailed shrinkage priors and showed certain optimality properties of the multiple testing rules induced by the corresponding Bayes estimates. \citet{PS2011} suggested that in sparse problems, one should choose the prior distribution corresponding to the local shrinkage parameter to be appropriately heavy-tailed so that large signals can escape the ``gravitational pull'' of the corresponding global variance component and are almost left unshrunk which is essential for the recovery of large signals when the data is sparse. It is to be mentioned in this context that priors with exponential or lighter tails, such as the Laplace or the double-exponential prior and the normal prior, fail to meet this condition.\newline

Motivated by this, we consider in this article, the problem of estimating a sparse multivariate normal mean vector based on a very general class of ``tail-robust'' one-group priors that is rich enough to include a wide variety of shrinkage priors, such as, the three parameter beta normal mixtures (which generalizes the horseshoe prior in particular), the generalized double Pareto prior, the inverse-gamma priors, the half-t priors and many more. We work under the framework when the number of non-zero components of the unknown mean vector is assumed to be known and the global shrinkage parameter is treated as a tuning parameter that we are free to choose and which tends to zero at an appropriate rate as the dimension grows to infinity. It is shown that when the underlying multivariate normal mean vector is sparse in the nearly-black sense, the Bayes estimates corresponding to this general class of priors asymptotically attain the minimax-quadratic risk, possibly up to a multiplicative factor and the corresponding posterior distributions contract around the true mean vector at the minimax optimal rate. Moreover, we provide conditions on the choice of the global shrinkage parameter for which these posterior distributions contract around the corresponding Bayes estimates at least as fast as the minimax risk with respect to the $l_2$ norm. We also provide a lower bound to the corresponding posterior variance for an important subclass of this general class of shrinkage priors that include the generalized double Pareto priors with shape parameter $\alpha=1$, the three parameter beta normal mixtures with parameters $a=\frac{1}{2}$ and $b>0$ (including the horseshoe in particular), the inverse gamma prior with shape parameter $(\alpha=\frac{1}{2})$ and many other shrinkage priors. This lower bound helps us to deduce further insights about the posterior spread of the aforesaid sub-family of tail robust priors and also regarding the choice of the global shrinkage component. An important contribution of our theoretical investigation is showing that shrinkage priors which are appropriately heavy-tailed (to be defined in Section 2), are able to attain the minimax optimal rate of contraction, provided that the global tuning parameter is carefully chosen. We provide a general unifying argument that works for this general class under consideration and thus extends the work of \citet{PKV2014}.\newline

We organize the paper as follows. In Section 2, we describe the problem and the general class of shrinkage priors under consideration. Section 3 contains the main theoretical results, that estimators arising out of this general class of shrinkage priors attain the minimax quadratic risk up to some multiplicative constant and that the corresponding posterior distribution results in a minimax optimal rate of posterior contraction around the truth. Section 4 contains some concluding remarks. Proofs of the main theorems and other supporting results essential for their derivation are given in the Appendix.

\subsection*{Notations and Definition}
In this paper, we adopted the same convention of notation used in \citet{PKV2014}. Let $\{A_n\}$ and $\{B_n\}$ be two sequences of positive real numbers indexed by $n$. We write $A_n\asymp B_n$ to denote $0 < \lim_{n \rightarrow \infty} \inf_{n} \frac{A_n}{B_n} \leqslant \lim_{n \rightarrow \infty} \sup_{n} \frac{A_n}{B_n} < \infty$ and $A_n \lesssim B_n $ to denote that there exists some $c > 0$ independent of $n$ such that $A_n \leqslant cB_n$.

\begin{definition}\label{DEFN_SVF}
A positive measurable function $L$ defined over some $(A,\infty)$, $A \geqslant 0$, is said to be slowly varying or is said to vary slowly (in Karamata's sense) if for every fixed $ \alpha > 0$, $L(\alpha x)\sim L(x)$ as $x \rightarrow \infty$.
\end{definition}

A simple sufficient condition for a positive measurable function $L$ to be slowly varying is that $L(x) \rightarrow c \mbox{ as } x \rightarrow \infty$, for some $c \in (0,\infty)$.



\section{A General Class of Tail Robust Shrinkage Priors}

Suppose that we observe an n-component random observation $(X_1,\cdots,X_n) \in \mathbb{R}^n$, such that
\begin{equation}
 X_i = \theta_i + \epsilon_i  \mbox{  for } i =1,\cdots,n, \label{TRUE_MODEL}
\end{equation}
where the unknown parameters $\theta_1, \cdots, \theta_n $ denote the effects under investigation and $\epsilon=(\epsilon_1,\cdots,\epsilon_n) \sim N_{n}(0, I_n)$.\newline

Let $l_0[p_n]$ denote the subset of $\mathbb{R}^n$ given by,
\begin{equation}
l_0[p_n] = \{\theta \in \mathbb{R}^n : \#(1 \leqslant j \leqslant n : \theta_j \neq 0) \leqslant p_n\}. 
\end{equation}
 
Suppose we want to estimate the true mean vector $\theta_0=(\theta_{01},\cdots,\theta_{0n})$ when $\theta_0$ is known to be sparse in the ``nearly black sense'', that is, $\theta_0 \in l_0[p_n] $ with $p_n=o(n)$ as $n \rightarrow \infty$. The corresponding minimax rate with respect to the $l_2$-norm for estimating $\theta_0$ is given by (see \citet{DJHS1992}),
\begin{eqnarray}
 \inf_{\hat{\theta}}\sup_{\theta_{0}\in l_{0}[p_{n}]}E_{\theta_{0}}||\hat{\theta}-\theta_{0}||^2=2p_n\log\big(\frac{n}{p_n}\big)(1+o(1)),\mbox{ as } n\rightarrow\infty\label{MINIMAX_RATE}. 
 \end{eqnarray}
 In (\ref{MINIMAX_RATE}) above and throughout this paper $E_{\theta_{0}}$ denotes an expectation with respect to the $N_{n}(\theta_0, I_n)$ distribution. Our goal is to obtain an estimate of $\theta_0$ from a Bayesian view point with some good theoretical properties. As stated already in the introduction that a natural Bayesian approach to model (\ref{TRUE_MODEL}) is to use a two-component point mass mixture prior for the $\theta_i$'s, given by,
\begin{equation}\label{TWO_GROUP_THETA}
 \theta_i \stackrel{i. i. d.}{\sim} (1-\pi)\delta_{\{0\}} + \pi\cdot f, \mbox{ $i=1,\cdots,m.$}
 \end{equation} 
where $\delta_{\{0\}}$ denotes the distribution having probability mass 1 at the point 0, and $f$ denotes an absolutely continuous distribution over $\mathbb{R}$. See \citet{MB1988} and \citet{JS2004} in this context. It is usually recommended to choose a heavy tailed absolutely continuous distribution $f$ over $\mathbb{R}$ so that large observations can be recovered with higher degree of accuracy. \citet{JS2004} used a $t$ distribution in this context and used an empirical Bayes approach in order to estimate the unknown mixing proportion $\pi$ via the method of marginal maximum likelihood and showed that if the coordinate-wise posterior median estimate is used, the resulting estimator of $\theta_0$ attains the minimax rate with respect to the $l_q$ loss, $q \in (0, 2]$. \citet{CV2012} studied the full Bayes approach where they found conditions on the two-groups prior that ensure contraction of the posterior distribution at the minimax rate. A detailed list of other empirical Bayes approaches to the two-group model can be found in \citet{CV2012}, \citet{Efron2008} \citet{JZ2009}, \citet{YL2005} and references therein.\newline

 As already mentioned in the introduction that although the two groups prior (\ref{TWO_GROUP_THETA}) is considered to be the most natural formulation for handling sparsity from a Bayesian view point, it offers a daunting computational challenge in high dimensional problems because of the enormously large model space (in this case it is $2^n$). Due to this reason, the one-group formulation to model sparse data has received considerable attention from researchers over the years, mostly due to the ease of their computational tractability. \citet{PS2011} showed that almost all such shrinkage priors can be expressed as multivariate scale-mixture of normals which makes the computation based on these one-group shrinkage priors much easier compared to the corresponding two-group formulation. Standard Markov-chain Monte Carlo techniques are available in the Bayesian literature for the computation of the corresponding Bayes estimates of the underlying model parameters. In this article, we consider Bayes estimators based on a general class of one-group shrinkage priors given through the following hierarchical one-group formulation:
  \begin{align*}
  \begin{array}{ll}\label{ONE_GRP_PRIOR}
  X_i|\theta_i &\sim \mbox{ } N(\theta_i,1), \mbox{ independently for } i=1,\cdots,m\\
  \theta_{i}|(\lambda_i^2, \tau^2) &\sim \mbox{ }N(0,\lambda_i^2\tau^2), \mbox{ independently for } i=1,\cdots,m \nonumber\\
  \lambda_i^2 &\sim \mbox{ } \pi(\lambda_i^2), \mbox{ independently for } i=1,\cdots,m
  \end{array}
 \end{align*} 
 with $\pi(\lambda_i^2)$ being given by,
 \begin{equation}
 \label{LAMBDA_PRIOR}
  \pi(\lambda_i^2) =K (\lambda_i^2)^{-a-1}L(\lambda_i^2),
 \end{equation}
 where $K \in (0,\infty)$ is the constant of proportionality, $a$ is a positive real number and $L:(0,\infty) \rightarrow (0,\infty)$ is a measurable, non-constant slowly varying function. For the theoretical development in this paper, we assume that the function $L(\cdot)$ in (\ref{LAMBDA_PRIOR}) satisfies the following:
\begin{assumption}\label{ASSUMPTION_LAMBDA_PRIOR}
\qquad
 \begin{enumerate}
 \item  $\lim_{t \rightarrow \infty}L(t) \in (0,\infty),$ that is, there some exists $c_{0}(>0)$ such that $L(t) \geqslant c_{0}$ for all $t\geqslant t_{0}$, for some $t_{0}>0$, which depends on both $L$ and $c_{0}$. 
 
 \item There exists some $0 < M < \infty $ such that $\sup_{t\in(0,\infty)}L(t) \leqslant M.$
\end{enumerate}
\end{assumption}

Each $\lambda_i^2$ is referred to as a local shrinkage parameter and the parameter $\tau^2$ is called the global shrinkage parameter. For the theoretical treatment of this paper, we assume the global shrinkage parameter $\tau$ to be known. We would like to mention here that a very broad class of one-group shrinkage priors actually fall inside this above general class. For example, it can be easily seen that the celebrated horseshoe prior is a member of this general class under study by simply taking $a=0.5$ and $L(t)= t/(1+t)$ in (\ref{LAMBDA_PRIOR}) satisfying both the conditions of Assumption \ref{ASSUMPTION_LAMBDA_PRIOR}. \citet{GTGC2015} observed that the three parameter beta normal mixtures (which include the horseshoe and the normal-exponential-gamma priors as special cases) and the generalized double Pareto priors can be expressed in the above general form by showing that the corresponding prior distribution of the local shrinkage parameters can be written in the form given in (\ref{LAMBDA_PRIOR}) with the corresponding $L(\cdot)$ satisfying Assumption \ref{ASSUMPTION_LAMBDA_PRIOR}. It is easy to verify that some other well known shrinkage priors such as the families of inverse-gamma priors and the half-t priors are also covered by this general class of prior distributions under consideration. We would like to mention in this context that the above general class exclude priors such as the double-exponential or Laplace prior or the normal prior which have exponential or lighter tails.\newline

From Theorem 1 of \citet{PS2011} it follows that the above general class of one-group priors will be ``tail-robust'' in the sense that for any given $\tau > 0$, $E(\theta_i|X_i,\tau^2) \approx X_i,$ for large $X_i$'s, which means for such priors large observations will be almost left unshrunk even when the global shrinkage parameter $\tau$ is too small. We shall elucidate this fact in some greater detail in the Appendix. It was suggested in \citet{PS2011} that the global shrinkage parameter $\tau$ should be small so that small $X_i$'s or the noise observations can be shrunk towards the origin while the prior distribution of the local shrinkage parameters $\lambda_i^2$ should have heavy tails so that large signals can escape the effect of $\tau^2$ and almost remain unshrunk. Thus (\ref{LAMBDA_PRIOR}) should result in a prior distribution for the $\theta_i$'s which has a high concentration of mass near the origin but have thick tails at the extremes to accommodate large signals. \citet{PS2011} also showed that for priors having exponential or lighter tails, such as the Laplace or the double-exponential prior, even the large $X_i$'s will always be shrunk towards the origin by some non-diminishing amount for small values of $\tau$, which is certainly not desirable for the recovery of large signals in sparse situations.\newline

Now for a general global-local scale mixture of normals we have,
\begin{equation}
 \theta_{i}|(X_i, \lambda_i^2, \tau^2) \sim N((1-\kappa_i)X_i,(1-\kappa_i)), \mbox{ } \kappa_i=1/(1+\lambda_i^2\tau^2),\nonumber
\end{equation}
independently for $i=1,\cdots,m$, so that for each $i$, the posterior mean of $\theta_i$ is given by,
\begin{equation} \label{POST_MEAN_OG1}
 E(\theta_{i}|X_i, \lambda_i^2, \tau^2)=(1-\kappa_i)X_i, .
\end{equation} 
Next, using the iterated expectation formula it follows that,
   \begin{equation} \label{POST_MEAN_OG2}
   E(\theta_{i}|X_i,\tau^2)=(1-E(\kappa_i|X_i,\tau^2))X_i.
   \end{equation}

The resulting posterior mean $E(\theta|X,\tau)=(E(\theta_{1}|X_1,\tau^2),\cdots,E(\theta_{m}|X_m,\tau^2))$ will be the Bayes estimator arising out of the general class of shrinkage priors (\ref{LAMBDA_PRIOR}) and will be denoted by $T_{\tau}(X)$. For notational convenience, we shall denote $E(\theta_{i}|X_i,\tau^2)$ by $T_{\tau}(X_i)$. It will be shown in the next section that when $\theta_0$ is sparse in the ``nearly black sense'' and $a \in [\frac{1}{2},1)$, the estimator $T_{\tau}(X)$ of $\theta_0$ will asymptotically attain the minimax rate in (\ref{MINIMAX_RATE}) up to some multiplicative constant and that the posterior distribution contracts at least around the true $\theta_0$ at the minimax rate for suitably chosen $\tau$ depending on $\frac{p_n}{n}$.

\section{Theoretical Results}
In this section, we present the theoretical results involving the mean square error for the Bayes estimates arising out of the general class of one-group shrinkage priors under study, with $a\in [\frac{1}{2},1)$, and the spread of the corresponding posterior distributions. It is assumed that the number of non-zero components $p_n$ of the unknown mean vector is known. Theorem \ref{THM_3.1} gives an upper bound on the mean square error for the Bayes estimates arising out of the general class one-group priors under consideration. Using this upper bound, it follows that for various choices of the global shrinkage parameter $\tau$, depending on the proportion of non-zero means $\frac{p_n}{n}$, the aforesaid Bayes estimates attain the minimax risk with respect to the $l_2$-norm, possibly up to a multiplicative constant. Theorem \ref{THM_3.2} gives an upper bound to the total posterior variance for the one-group priors under study. Theorem \ref{THM_3.3} provides an upper bound to the rate of contraction around the true mean vector for the posterior distributions corresponding to the aforesaid class of priors, which shows minimax optimal contraction property around the true mean vector for these posterior distributions. However, the same may not be true for contraction around the corresponding Bayes estimates. Theorem \ref{THM_3.4} provides condition on the choice of the global shrinkage parameter $\tau$ that ensures that the resulting posterior distributions contract around the corresponding Bayes estimates at least as fast as the minimax-quadratic risk. Theorem \ref{THM_3.5} provides a lower bound to the total posterior variance for an important subclass of this general class of shrinkage priors that gives more insight about the spread of the posterior distribution around these estimators for various choices of $\tau$. We provide some novel unifying arguments that work for a broad class of one-group shrinkage priors. However, we follow the broad architecture of the proofs of the main theorems of \cite{PKV2014}. Lemmas \ref{LEMMA_MOST_IMP} - \ref{LEMMA_IMPORTANT_INTEGRAL}, given in the Appendix, on which Theorems \ref{THM_3.1} - \ref{THM_3.3} crucially hinge upon, are completely independent of the work of \citet{PKV2014}. However, proofs of Lemma \ref{LEMMA_POST_VAR_LOWER_BOUND} and Theorem \ref{THM_3.5} have been derived following some key arguments of \citet{PKV2014}. This shows that the general scheme of arguments in \citet{PKV2014} can be used in greater generality.

\begin{thm}\label{THM_3.1}
 Suppose $X \sim \mathcal{N}_{n}(\theta_{0},I_{n})$. Then the estimator $T_{\tau}(x)$ based on the general class of shrinkage priors (\ref{LAMBDA_PRIOR}), with $\frac{1}{2} \leqslant a < 1$, satisfies
 \begin{equation}\label{MSE_BAYES_ESTIMATE}
 \sup_{\theta_{0}\in l_{0}[p_{n}]}E_{\theta_{0}}||T_{\tau}(X)-\theta_{0}||^2
 \lesssim p_n\log\big(\frac{1}{\tau^{2a}}\big)+(n-p_n)\tau^{2a}\sqrt{\log\big(\frac{1}{\tau^{2a}}\big)}
 \end{equation}
 if $\tau \rightarrow 0$, as $n\rightarrow\infty$, $p_n\rightarrow \infty$ and $p_n=o(n)$.
\end{thm}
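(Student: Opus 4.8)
The plan is to exploit the coordinate-wise structure of the Bayes estimator. Since $T_{\tau}(X)=(T_{\tau}(X_1),\dots,T_{\tau}(X_n))$ acts separately on each coordinate and the observations are independent, the quadratic risk splits as
\begin{equation}
E_{\theta_{0}}\|T_{\tau}(X)-\theta_{0}\|^2=\sum_{i=1}^{n}E_{\theta_{0i}}\big(T_{\tau}(X_i)-\theta_{0i}\big)^2 .
\end{equation}
I would then separate the indices into the at most $p_n$ ``signal'' coordinates ($\theta_{0i}\neq0$) and the remaining ``noise'' coordinates ($\theta_{0i}=0$), and prove one per-coordinate bound of each type, uniform in the value of the signal. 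Using $T_{\tau}(X_i)=(1-E(\kappa_i|X_i,\tau^2))X_i$ from (\ref{POST_MEAN_OG2}), everything reduces to controlling the posterior quantity $E(\kappa_i|X_i,\tau^2)$ as a function of $X_i$ and $\tau$. The natural device is the change of variables $\lambda_i^2\mapsto\kappa_i=1/(1+\lambda_i^2\tau^2)$, under which $X_i\mid\kappa_i$ has variance $1/\kappa_i$ and the posterior density of $\kappa_i$ given $X_i=x$ is proportional to $\kappa^{a-1/2}(1-\kappa)^{-a-1}e^{-\kappa x^2/2}L\big((1-\kappa)/(\kappa\tau^2)\big)$ on $(0,1)$.

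For a noise coordinate, $X_i\sim N(0,1)$ and $E_0T_{\tau}(X_i)^2=E_0\big[(1-E(\kappa_i|X_i))^2X_i^2\big]$, where $1-E(\kappa_i|x)=E(1-\kappa_i|x)$ is small precisely when $x$ is small. I would establish, as a technical lemma, a bound on $E(1-\kappa_i|x)$ that is sharp for small $\tau$ and then integrate $(1-E(\kappa_i|x))^2x^2$ against the standard normal density $\phi$; the dominant contribution comes from $|x|$ up to the effective threshold $\sqrt{\log(1/\tau^{2a})}$ and yields the per-coordinate bound $\tau^{2a}\sqrt{\log(1/\tau^{2a})}$. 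For a signal coordinate I would use $(T_{\tau}(X_i)-\theta_{0i})^2\le 2(X_i-\theta_{0i})^2+2X_i^2E(\kappa_i|X_i)^2$, so that the first term contributes the constant $2$ after taking expectation and the second is controlled through the deterministic bound $X_i^2E(\kappa_i|X_i)^2\le X_i^2E(\kappa_i|X_i)\le\sup_{x}x^2E(\kappa_i|x)$. The crux is then $\sup_{x\in\mathbb{R}}x^2E(\kappa_i|x)\lesssim\log(1/\tau^{2a})$: for $|x|$ well beyond the threshold the tail-robustness forces $x^2E(\kappa_i|x)=O(1)$, while the supremum is attained near $x^2\asymp\log(1/\tau^{2a})$, where $E(\kappa_i|x)$ is still of constant order. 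This gives a signal risk $\lesssim\log(1/\tau^{2a})$ uniformly over the value of $\theta_{0i}$.

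Finally I would assemble the two per-coordinate estimates. For any $\theta_{0}\in l_0[p_n]$ with $k\le p_n$ nonzero entries, the risk is at most $k\cdot C_1\log(1/\tau^{2a})+(n-k)\cdot C_2\tau^{2a}\sqrt{\log(1/\tau^{2a})}$; since the per-signal bound dominates the per-noise bound and $k\le p_n$, this is in turn bounded by $p_nC_1\log(1/\tau^{2a})+(n-p_n)C_2\tau^{2a}\sqrt{\log(1/\tau^{2a})}$, which is exactly (\ref{MSE_BAYES_ESTIMATE}). Note that $\tau\to0$ makes $\log(1/\tau^{2a})\to\infty$, so additive constants such as the $2$ above are absorbed into the $\lesssim$.

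The main obstacle is precisely the pair of per-coordinate posterior estimates, that is, the sharp control of $E(\kappa_i|x)$ and $E(1-\kappa_i|x)$ uniformly in $x$ as $\tau\to0$. This is where the slowly varying function $L$ and Assumption \ref{ASSUMPTION_LAMBDA_PRIOR} must be handled: one has to show that, because $L$ is pinned between $c_0$ and $M$ on the relevant range, it affects only the constants and not the leading $\log(1/\tau^{2a})$ and $\tau^{2a}$ rates; one must locate the transition of $E(\kappa_i|x)$ near the threshold; and one must verify that the restriction $a\in[\frac{1}{2},1)$ delivers the stated exponent $\tau^{2a}$ (tails heavy enough for robustness, yet the $\kappa$-posterior well behaved). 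These are the integral estimates I would isolate as Lemmas \ref{LEMMA_MOST_IMP}--\ref{LEMMA_IMPORTANT_INTEGRAL}.
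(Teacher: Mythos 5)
Your proposal is correct and takes essentially the same approach as the paper's proof: the same split into signal and noise coordinates, the same threshold of order $\sqrt{\log(1/\tau^{2a})}$ with the trivial bounds $|T_{\tau}(x)-x|\leqslant|x|$ and $E(\kappa|x)\leqslant 1$ below it, and the moment/concentration estimates for the posterior shrinkage coefficient (Lemmas \ref{LEM_MOMENT_INEQ}--\ref{LEMMA_MOST_IMP}) above it. The only differences are cosmetic: you use $(a+b)^2\leqslant 2a^2+2b^2$ where the paper applies Cauchy--Schwarz to the cross term, and you bound the deterministic supremum $\sup_{x}x^2E(\kappa|x)$ (a marginally stronger, but equally provable, form of Lemma \ref{LEMMA_MOST_IMP}, since $xh_{1}(x,\tau)$ and $xh_{2}(x,\tau)$ stay bounded beyond the threshold) rather than the expectation $E_{\theta_{i}}\big(T_{\tau}(X_i)-X_i\big)^2$.
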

\begin{proof}
 See Appendix.
\end{proof}

An important consequence of Theorem \ref{THM_3.1} is that, for $\frac{1}{2} \leqslant a < 1$, the Bayes estimators based on the general class of one-group priors under study, attain the minimax quadratic risk up to some multiplicative factor for various choices of the global shrinkage parameter $\tau$. For example, by taking $\tau=\big(\frac{p_n}{n}\big)^{\alpha}$, $\alpha \geqslant 1$, in the upper bound in (\ref{MSE_BAYES_ESTIMATE}), it follows that the worst case $l_2$-risk for the Bayes estimates as considered in Theorem \ref{THM_3.1} can at most be of the order of $p_n\log\big(\frac{n}{p_n}\big)$ up to a multiplicative factor, the factor being $4a\rho^2\alpha$, while it is always bounded below the minimax risk $2p_n\log(n/p_n)(1+o(1))$ as $n\rightarrow\infty$. In particular,
 \begin{equation}\label{BAYES_ESTIMATE_MINIMAXITY}
 \sup_{\theta_{0}\in l_{0}[p_{n}]}E_{\theta_{0}}||T_{\tau}(X)-\theta_{0}||^2
 \asymp p_n\log\big(\frac{n}{p_n}\big).
 \end{equation}
It should also be noted that there are other choices of $\tau$, such as, $\tau=\frac{p_n}{n}\sqrt{\log(n/p_n)}$, which satisfy (\ref{BAYES_ESTIMATE_MINIMAXITY}) also. It is worth noting in this context that for priors like horseshoe or standard double Pareto for which $a=0.5$, the aforesaid multiplicative factor becomes $2\rho^2\alpha$ which is an improvement over the the corresponding multiplicative factor $4\alpha$ as obtained by \citet{PKV2014} for the horseshoe prior, provided $\rho>1$ is chosen in such a way that $\rho^2<2$. Therefore, we see that the Bayes estimators based on this general class of priors, with $a\in[\frac{1}{2},1)$, performs well as a point estimator, since each of them attains the minimax risk (\ref{MINIMAX_RATE}) up to some multiplicative constant, provided the global shrinkage parameter is carefully chosen. One possible explanation for such good performance of these priors is their ability to squelch the noise observations back to the origin, while leaving the large observations almost unshrunk, for appropriately chosen $\tau$. Moreover, smaller values of $a$ typically result in a prior distribution with heavier tails. For example, for the inverted beta families, $a=0.5$ yields Cauchy like tails. See \citet{PS2012} in this context. Similar discussion on how the choice of the parameter $a$ controls the tail behavior of the generalized double Pareto priors can also be found in \citet{ADL2012}. Recall that for the generalized double Pareto priors, we have $a=\alpha/2$, where $\alpha$ denotes the corresponding shape parameter (see \cite{GTGC2015}). \citet{ADL2012} recommended using the standard double Pareto distribution as a default prior specification which has Cauchy like tail and for which one has $a=0.5$. Thus, Theorem \ref{THM_3.1} extends the asymptotic minimaxity property of the horseshoe estimator obtained by \citet{PKV2014} over a large class of heavy tailed one-group shrinkage priors and can be considered as an important theoretical justification for the use of such priors when $a\in[\frac{1}{2},1)$.\newline

The next theorem gives an upper bound related to the total posterior variance corresponding to our general class of heavy tailed shrinkage priors.

\begin{thm}\label{THM_3.2}
 Suppose $X \sim \mathcal{N}_{n}(\theta_{0},I_{n})$. Then the variance of the posterior distribution corresponding to the general class of shrinkage priors (\ref{LAMBDA_PRIOR}), with $\frac{1}{2} \leqslant a < 1$, satisfies
 \begin{equation}\label{EXPECT_POST_VAR_UB}
 \sup_{\theta_{0}\in l_{0}[p_{n}]}E_{\theta_{0}} \sum_{i=1}^{n} Var(\theta_{0i}|X_i) \lesssim
  p_n+(n-p_n)\tau^{2a}\sqrt{\log\big(\frac{1}{\tau^{2a}}\big)}.
 \end{equation}
 
if $\tau \rightarrow 0$, as $n\rightarrow\infty$, $p_n\rightarrow \infty$ and $p_n=o(n)$.
\end{thm}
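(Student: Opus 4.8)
The plan is to reduce the total posterior variance to the same one-dimensional integral estimates that drive Theorem~\ref{THM_3.1}, via the exact variance decomposition of the scale mixture. Writing $\kappa_i=1/(1+\lambda_i^2\tau^2)$ and using $\theta_i\mid(X_i,\lambda_i^2,\tau^2)\sim N((1-\kappa_i)X_i,\,1-\kappa_i)$ together with the law of total variance, one obtains for each coordinate
\begin{equation*}
Var(\theta_i\mid X_i)=E\big[(1-\kappa_i)\mid X_i\big]+X_i^2\,Var\big(\kappa_i\mid X_i\big),
\end{equation*}
a sum of two nonnegative terms. I would bound $\sup_{\theta_0}E_{\theta_0}\sum_i E[(1-\kappa_i)\mid X_i]$ and $\sup_{\theta_0}E_{\theta_0}\sum_i X_i^2\,Var(\kappa_i\mid X_i)$ separately, and in each sum split the indices into the (at most) $p_n$ signal coordinates $\{i:\theta_{0i}\neq0\}$ and the $(n-p_n)$ noise coordinates $\{i:\theta_{0i}=0\}$.

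For the first term the crude bound $E[(1-\kappa_i)\mid X_i]\le1$ (valid since $\kappa_i\in(0,1)$) already contributes at most $p_n$ over the signal coordinates. For a noise coordinate ($\theta_{0i}=0$, so $X_i\sim N(0,1)$) I would invoke the integral estimates of Lemmas~\ref{LEMMA_MOST_IMP}--\ref{LEMMA_IMPORTANT_INTEGRAL} --- precisely the bounds that control the shrinkage weight $1-E(\kappa_i\mid X_i)$ in the proof of Theorem~\ref{THM_3.1} --- to show $E_{\theta_0}E[(1-\kappa_i)\mid X_i]\lesssim\tau^{2a}\sqrt{\log(1/\tau^{2a})}$, so the noise coordinates contribute $\lesssim(n-p_n)\tau^{2a}\sqrt{\log(1/\tau^{2a})}$.

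For the second term, on the noise coordinates I would use $Var(\kappa_i\mid X_i)=Var(1-\kappa_i\mid X_i)\le E[(1-\kappa_i)^2\mid X_i]$, so that $E_{\theta_0}\big[X_i^2\,Var(\kappa_i\mid X_i)\big]\le E_{\theta_0}\big[X_i^2\,E((1-\kappa_i)^2\mid X_i)\big]$, the second moment of the conditional posterior mean $(1-\kappa_i)X_i$; this is governed by the same family of integrals as the noise part of \eqref{MSE_BAYES_ESTIMATE} (now applied to the second moment of $1-\kappa_i$) and is $\lesssim\tau^{2a}\sqrt{\log(1/\tau^{2a})}$ per coordinate.

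The signal part of the second term is the delicate step, and I expect it to be the main obstacle. Here the bound $Var(\kappa_i\mid X_i)\le E[(1-\kappa_i)^2\mid X_i]$ is useless, since $E[(1-\kappa_i)^2\mid X_i]\to1$ as $|X_i|\to\infty$ and would force $X_i^2\,Var(\kappa_i\mid X_i)$ to grow like $X_i^2$. One must instead exploit the genuine concentration of the $\kappa_i$-posterior at a large observation: for $|x|$ large the density $p(\kappa\mid x)\propto\kappa^{a-1/2}(1-\kappa)^{-a-1}L\big(\tfrac{1-\kappa}{\kappa\tau^2}\big)e^{-\kappa x^2/2}$ behaves near $\kappa=0$ like a Gamma$(a+\tfrac12,\,x^2/2)$ density, whence $E(\kappa_i\mid x)=O(x^{-2})$ and $Var(\kappa_i\mid x)=O(x^{-4})$, so that $x^2\,Var(\kappa_i\mid x)=O(x^{-2})$; for moderate $x$ one must in addition control the narrow interval in which the $\kappa_i$-posterior transitions between its high- and low-shrinkage regimes. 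The crux is to turn these heuristics, using the slowly varying estimates of Lemmas~\ref{LEMMA_MOST_IMP}--\ref{LEMMA_IMPORTANT_INTEGRAL}, into a bound on $E_{\theta_{0i}}\big[X_i^2\,Var(\kappa_i\mid X_i)\big]$ that is controlled uniformly in $\theta_{0i}$; once this is in hand each signal contributes $\lesssim1$, i.e. $\lesssim p_n$ in total. Combining the signal contributions ($\lesssim p_n$) with the noise contributions ($\lesssim(n-p_n)\tau^{2a}\sqrt{\log(1/\tau^{2a})}$) from both terms, and taking the supremum over $\theta_0\in l_0[p_n]$, would then yield \eqref{EXPECT_POST_VAR_UB}.
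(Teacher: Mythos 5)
Your overall architecture --- the law-of-total-variance decomposition $Var(\theta_i\mid X_i)=E[(1-\kappa_i)\mid X_i]+X_i^2\,Var(\kappa_i\mid X_i)$, the signal/noise split, and the treatment of the noise coordinates via the moment bounds --- is exactly the paper's (Lemma \ref{POST_VAR_IDENTITY} plus Lemmas \ref{LEM_MOMENT_INEQ} and \ref{LEMMA_IMPORTANT_INTEGRAL}). But the step you yourself flag as ``the main obstacle,'' namely the signal contribution of $X_i^2\,Var(\kappa_i\mid X_i)$, is left as a heuristic (the Gamma$(a+\tfrac12,x^2/2)$ approximation near $\kappa=0$, the unresolved ``transition regime'' for moderate $x$), and that is precisely where the proof must do real work; as written this is a genuine gap. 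The paper closes it with no new concentration analysis, by playing the two forms of the variance identity in Lemma \ref{POST_VAR_IDENTITY} against each other in the two regimes $|x|\leqslant\zeta_\tau$ and $|x|>\zeta_\tau$, where $\zeta_\tau=\sqrt{2\log(1/\tau^{2a})}$. On $|x|\leqslant\zeta_\tau$ it uses exactly the bound you dismissed as useless, $Var(\kappa\mid x)\leqslant E[(1-\kappa)^2\mid x]$, i.e.\ $Var(\theta\mid x)\leqslant 1+J(x,\tau)$ with $J(x,\tau)=x^2E[(1-\kappa)^2\mid x]$: your objection (that $E[(1-\kappa)^2\mid x]\to1$ forces growth like $x^2$) only bites for large $|x|$, whereas on the moderate range Lemma \ref{LEMMA_IMPORTANT_INTEGRAL} gives the uniform-in-$x$ bound $J(x,\tau)\leqslant 2KMe^{x^2/2}\tau^{2a}(1+o(1))$, and $e^{x^2/2}\tau^{2a}\leqslant1$ when $|x|\leqslant\zeta_\tau$, so each such observation contributes $O(1)$ with no analysis of any transition regime.

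On $|x|>\zeta_\tau$ the paper switches to the other identity (\ref{VAR_IDNT_1}), in which the subtracted square is $(T_\tau(x)-x)^2=x^2E^2[\kappa\mid x]$, so the positive term left to control is $x^2E[\kappa^2\mid x]$ rather than $x^2E[(1-\kappa)^2\mid x]$. That term is handled by repeating the argument of Lemma \ref{LEMMA_MOST_IMP} (split at $\kappa=\eta$, the change of variables $u=x^2/(1+t\tau^2)$ for the $\kappa<\eta$ part, the concentration inequality of Lemma \ref{LEM_CONCENTRATION_INEQ} for the $\kappa>\eta$ part); this is the rigorous counterpart of your Gamma heuristic and produces a function $\tilde h(x,\tau)$ with $Var(\theta\mid x)\leqslant\tilde h(x,\tau)$ and $\lim_{\tau\downarrow0}\sup_{|x|>\rho\zeta_\tau}\tilde h(x,\tau)=1$. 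Note also that both regime bounds are pointwise in $x$ and never mention $\theta_{0i}$, so the uniformity in $\theta_{0i}$ you worry about is automatic: $E_{\theta_{0i}}Var(\theta_{0i}\mid X_i)\lesssim1$ for every signal coordinate, which is the $p_n$ term. If you carry out these two regime bounds explicitly, your outline becomes the paper's proof; without them, the signal part of the variance term remains unproved.
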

\begin{proof}
 See Appendix.
\end{proof}

It should be noted that, for the general class of priors under study, with $a\in[\frac{1}{2},1)$, the upper bound in (\ref{EXPECT_POST_VAR_UB}) is sharper as compared to that given in Theorem 3.2 of \citet{PKV2014} for the horseshoe prior. Theorems \ref{THM_3.1} and \ref{THM_3.2} together allow us to find a sharp upper bound on the rate of contraction of the full posterior distribution around the underlying true mean vector as given by the following theorem.

\begin{thm}\label{THM_3.3}
 Under the assumptions of Theorem \ref{THM_3.2}, if $\tau=\big(\frac{p_n}{n}\big)^{\alpha}$, with $\alpha\geqslant1$, then
 \begin{equation}\label{OPT_POST_CON_ARND_TRUE_MEAN}
\sup_{\theta_{0} \in l_{0}[p_{n}]} E_{\theta_{0}} \Pi \bigg(\theta:||\theta-\theta_0||^2 > M_{n}p_n\log\big(\frac{n}{p_n}\big)|X\bigg) \rightarrow 0,
 \end{equation}
for every $M_n\rightarrow \infty$ as $n \rightarrow \infty$.
\end{thm}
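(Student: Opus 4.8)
The plan is to deduce the contraction statement directly from the two moment bounds already in hand, namely the mean-square-error bound of Theorem \ref{THM_3.1} and the total-posterior-variance bound of Theorem \ref{THM_3.2}, by centering the posterior at its own mean $T_\tau(X)$. Write $r_n = p_n\log(n/p_n)$ for the target rate. The first step is the elementary bound
\[
||\theta-\theta_0||^2 \leqslant 2||\theta-T_\tau(X)||^2 + 2||T_\tau(X)-\theta_0||^2,
\]
which shows that the event $\{||\theta-\theta_0||^2 > M_n r_n\}$ forces at least one of $\{||\theta-T_\tau(X)||^2 > M_n r_n/4\}$ or $\{||T_\tau(X)-\theta_0||^2 > M_n r_n/4\}$ to occur. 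Since the latter event depends only on the data, this yields the pointwise-in-$X$ inequality
\[
\Pi\big(||\theta-\theta_0||^2 > M_n r_n \mid X\big) \leqslant \Pi\Big(||\theta-T_\tau(X)||^2 > \tfrac{M_n r_n}{4}\,\Big|\, X\Big) + \mathbf{1}\Big\{||T_\tau(X)-\theta_0||^2 > \tfrac{M_n r_n}{4}\Big\}.
\]
It then suffices to take $E_{\theta_0}$ and the supremum over $l_0[p_n]$ of each of the two terms and show both tend to $0$.

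For the first (posterior-spread) term I would apply Markov's inequality under the posterior, using that $T_\tau(X)$ is \emph{exactly} the posterior mean, so that $E(||\theta-T_\tau(X)||^2\mid X)=\sum_{i=1}^n Var(\theta_i\mid X_i)$. This gives
\[
\Pi\Big(||\theta-T_\tau(X)||^2 > \tfrac{M_n r_n}{4}\,\Big|\, X\Big) \leqslant \frac{4}{M_n r_n}\sum_{i=1}^n Var(\theta_i\mid X_i),
\]
and taking $E_{\theta_0}$ and the supremum over $l_0[p_n]$, Theorem \ref{THM_3.2} bounds this (up to a constant) by $\frac{4}{M_n r_n}\big(p_n+(n-p_n)\tau^{2a}\sqrt{\log(1/\tau^{2a})}\big)$. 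For the second (bias) term I would apply Markov's inequality under $P_{\theta_0}$,
\[
E_{\theta_0}\mathbf{1}\Big\{||T_\tau(X)-\theta_0||^2 > \tfrac{M_n r_n}{4}\Big\} \leqslant \frac{4}{M_n r_n}\,E_{\theta_0}||T_\tau(X)-\theta_0||^2,
\]
and bound the right-hand side uniformly over $l_0[p_n]$ by Theorem \ref{THM_3.1}.

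The final step is to insert the prescribed $\tau=(p_n/n)^\alpha$ with $\alpha\geqslant 1$ and verify that both numerators are of order $r_n$, so that each term is $O(1/M_n)$ uniformly over $l_0[p_n]$; letting $M_n\to\infty$ then closes the argument for every such sequence. For the bias term this is precisely the minimaxity consequence of Theorem \ref{THM_3.1} recorded in (\ref{BAYES_ESTIMATE_MINIMAXITY}). For the spread term one checks that $(n-p_n)\tau^{2a}\sqrt{\log(1/\tau^{2a})}\lesssim r_n$: because $a\geqslant\frac12$ and $\alpha\geqslant 1$ we have $2a\alpha\geqslant 1$, whence $(n-p_n)\tau^{2a}\lesssim n(p_n/n)^{2a\alpha}=p_n(p_n/n)^{2a\alpha-1}\leqslant p_n$, while $\sqrt{\log(1/\tau^{2a})}=\sqrt{2a\alpha\log(n/p_n)}\lesssim\log(n/p_n)$, so the product, together with the leading $p_n$, is $\lesssim p_n\log(n/p_n)=r_n$.

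I do not expect a genuine obstacle here, since the analytic heavy lifting is entirely contained in Theorems \ref{THM_3.1} and \ref{THM_3.2}; the present statement is essentially their packaging via two Markov inequalities. The only point requiring care is the rate-compatibility check of the last paragraph, i.e.\ confirming that the total-posterior-variance upper bound does not exceed the target rate $r_n$ under the prescribed $\tau$. It is exactly here that the condition $a\in[\frac12,1)$ together with $\alpha\geqslant 1$ is used, through the inequality $2a\alpha\geqslant 1$; this is the structural reason the contraction holds at the minimax rate rather than merely at some slower rate.
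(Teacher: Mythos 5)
Your proposal is correct and is essentially the paper's own proof, which is stated there in one line as ``a straightforward application of Markov's inequality coupled with Theorems \ref{THM_3.1} and \ref{THM_3.2}'': you supply exactly that, together with the rate check $p_n\log(1/\tau^{2a})+(n-p_n)\tau^{2a}\sqrt{\log(1/\tau^{2a})}\lesssim p_n\log(n/p_n)$ under $\tau=(p_n/n)^{\alpha}$, $2a\alpha\geqslant 1$. The only cosmetic difference is that you split via $\|\theta-\theta_0\|^2\leqslant 2\|\theta-T_\tau(X)\|^2+2\|T_\tau(X)-\theta_0\|^2$ and apply Markov twice, where one could equivalently use the exact identity $E\big(\|\theta-\theta_0\|^2\mid X\big)=\|T_\tau(X)-\theta_0\|^2+\sum_{i=1}^n Var(\theta_i\mid X_i)$ and a single Markov inequality.
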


\begin{proof}
 A straight forward application of Markov's inequality coupled with the results of Theorem \ref{THM_3.1} and Theorem \ref{THM_3.2} leads to (\ref{OPT_POST_CON_ARND_TRUE_MEAN}).
\end{proof}

Note that (\ref{OPT_POST_CON_ARND_TRUE_MEAN}) implies that the posterior distributions contract around the true mean vector at a rate that can be at most of the order of the minimax rate in (\ref{MINIMAX_RATE}). On the other hand, it was shown in \citet{GGV2000} that the posterior distributions cannot contract faster than the minimax rate around the truth, that is, the rate of contraction cannot be of a smaller order compared to the minimax rate in (\ref{MINIMAX_RATE}). Hence, the rate of contraction around the true mean vector of the posterior distributions arising out of the general class of priors under consideration, with $a\in[\frac{1}{2},1)$, must be the of the order of the minimax optimal rate in (\ref{MINIMAX_RATE}), up to some multiplicative factors.\newline

It should, however, be noted that, for $\tau=\big(\frac{p_n}{n}\big)^{\alpha}$, with $\alpha\geqslant1$, the upper bound in (\ref{EXPECT_POST_VAR_UB}) results in a rate that is of a smaller order as compared to the minimax rate in (\ref{MINIMAX_RATE}). Consider, for example, the horseshoe or the standard double Pareto prior where one has $a=0.5$. Then taking $\tau=\frac{p_n}{n}$ in (\ref{EXPECT_POST_VAR_UB}), it follows that,
 \begin{equation}\label{SUBOPTIMAL_CONTRACTION}
\sup_{\theta_{0} \in l_{0}[p_{n}]} E_{\theta_{0}} \Pi \bigg(\theta:||\theta-T_{\tau}(X)||^2 > M_{n}p_n\sqrt{\log\big(\frac{n}{p_n}\big)}|X\bigg) \rightarrow 0,
 \end{equation}
for every $M_n\rightarrow \infty$ as $n \rightarrow \infty$. Equation (\ref{SUBOPTIMAL_CONTRACTION}) clearly shows that in such situations, the upper bound on the rate of contraction around the corresponding Bayes estimates, misses the minimax rate in (\ref{MINIMAX_RATE}) by the factor $\sqrt{\log(n/p_n)}$. Thus, for such choices of $\tau$, the corresponding posterior distributions contract too quickly around the corresponding Bayes estimates to be informative. However, if for each $a\in [0.5,1)$, we choose $\tau=\big(\frac{p_n}{n}\sqrt{\log(n/p_n)}\big)^{\frac{1}{2a}}$, the resulting posterior distributions contract around the corresponding Bayes estimators at least as fast as the minimax rate as given by the next theorem:

\begin{thm}\label{THM_3.4}
 Suppose $X \sim \mathcal{N}_{n}(\theta_{0},I_{n})$. If corresponding to each $a\in [0.5,1)$, we take $\tau=\big(\frac{p_n}{n}\sqrt{\log(n/p_n)}\big)^{\frac{1}{2a}}$, then the corresponding posterior distribution based on the general class of shrinkage priors (\ref{LAMBDA_PRIOR}), satisfies,
\begin{equation}\label{UB_CONTRACTION_RATE_ARND_BAYES_ESTIMATE}
\sup_{\theta_{0} \in l_{0}[p_{n}]} E_{\theta_{0}} \Pi \bigg(\theta:||\theta-T_{\tau}(X)||^2 > M_{n}p_n\log\big(\frac{n}{p_n}\big)|X\bigg) \rightarrow 0,
 \end{equation}
for every $M_n\rightarrow \infty$ as $n \rightarrow \infty$.
\end{thm}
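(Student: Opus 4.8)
The plan is to mimic the proof of Theorem \ref{THM_3.3}, the only essential differences being that here I use \emph{only} the posterior-variance bound of Theorem \ref{THM_3.2} (since I am measuring spread around the posterior mean, not around the truth) and that I exploit the specific tuning of $\tau$. First I would invoke Markov's inequality applied to the posterior distribution. Because $T_{\tau}(X_i)$ is by definition the posterior mean $E(\theta_i|X_i,\tau^2)$, the quantity $E(||\theta-T_{\tau}(X)||^2|X)$ is exactly the total posterior variance, so that
\begin{equation}
\Pi\big(\theta:||\theta-T_{\tau}(X)||^2 > M_n p_n \log(n/p_n)\,|\,X\big) \leqslant \frac{\sum_{i=1}^{n} Var(\theta_i|X_i)}{M_n p_n \log(n/p_n)}.\nonumber
\end{equation}
Taking $E_{\theta_0}$ of both sides and then the supremum over $\theta_0\in l_0[p_n]$ reduces the problem to controlling $\sup_{\theta_0}E_{\theta_0}\sum_{i}Var(\theta_i|X_i)$, for which Theorem \ref{THM_3.2} supplies the upper bound $p_n+(n-p_n)\tau^{2a}\sqrt{\log(1/\tau^{2a})}$.

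The crux is then a short calculation showing that, for the prescribed $\tau$, this bound is $\lesssim p_n\log(n/p_n)$, i.e. exactly the minimax rate in (\ref{MINIMAX_RATE}). Substituting $\tau=\big(\frac{p_n}{n}\sqrt{\log(n/p_n)}\big)^{1/(2a)}$ gives $\tau^{2a}=\frac{p_n}{n}\sqrt{\log(n/p_n)}$, whence
\begin{equation}
\log(1/\tau^{2a}) = \log(n/p_n) - \tfrac{1}{2}\log\log(n/p_n) \asymp \log(n/p_n)\nonumber
\end{equation}
as $n\to\infty$. Consequently $(n-p_n)\tau^{2a}\sqrt{\log(1/\tau^{2a})} \leqslant n\cdot\frac{p_n}{n}\sqrt{\log(n/p_n)}\cdot\sqrt{\log(n/p_n)}(1+o(1)) = p_n\log(n/p_n)(1+o(1))$, while the leading $p_n$ term is of strictly smaller order. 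I would therefore conclude that the displayed ratio above is $\lesssim \frac{p_n\log(n/p_n)}{M_n p_n\log(n/p_n)} = M_n^{-1}\to 0$ for every $M_n\to\infty$, which is the assertion.

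Since this argument is virtually identical in structure to the proof of Theorem \ref{THM_3.3}, I do not anticipate any genuine obstacle; the entire content of the theorem is really encoded in the choice of $\tau$. The single point requiring care is the asymptotic identity $\log(1/\tau^{2a})\asymp\log(n/p_n)$: one must verify that inserting the extra factor $\sqrt{\log(n/p_n)}$ into $\tau$ perturbs the logarithm only at second order, so that the second term of the Theorem \ref{THM_3.2} bound lands precisely on $p_n\log(n/p_n)$ rather than on the suboptimal $p_n\sqrt{\log(n/p_n)}$ that arose for the choice $\tau=\frac{p_n}{n}$ in (\ref{SUBOPTIMAL_CONTRACTION}). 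In other words, this value of $\tau$ is engineered exactly to inflate $\tau^{2a}\sqrt{\log(1/\tau^{2a})}$ by the missing $\sqrt{\log(n/p_n)}$ factor, thereby matching the posterior variance bound to the minimax risk and yielding contraction around the Bayes estimate at the optimal rate.
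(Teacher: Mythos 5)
Your proposal is correct and is exactly the paper's own argument: the paper proves Theorem \ref{THM_3.4} by the same one-line appeal to Markov's inequality combined with the bound (\ref{EXPECT_POST_VAR_UB}) of Theorem \ref{THM_3.2} under the stated choice of $\tau$. Your explicit verification that $\tau^{2a}=\frac{p_n}{n}\sqrt{\log(n/p_n)}$ gives $\log(1/\tau^{2a})\asymp\log(n/p_n)$, so the variance bound lands on $p_n\log(n/p_n)$, is precisely the calculation the paper leaves implicit.
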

\begin{proof}
The proof is almost immediate by an easy application of Markov's inequality together with the upper bound in (\ref{EXPECT_POST_VAR_UB}) by taking $\tau=\big(\frac{p_n}{n}\sqrt{\log(n/p_n)}\big)^{\frac{1}{2a}}$.
\end{proof}

Theorem \ref{THM_3.4} therefore gives an upper bound on the rate of contraction of the posterior distributions around the corresponding Bayes estimates and this bound equates the minimax rate up to some multiplicative factor. Note that choices of $\tau$, such as, $\tau=(\frac{p_n}{n}\sqrt{\log(n/p_n)})^{\frac{1}{2a}}$, depends on the value of the parameter $a$ in the definition $\pi(\lambda^2)$ in (\ref{LAMBDA_PRIOR}). Moreover, they also yield the minimax optimal rate which can be verified easily using the bounds given in Theorems \ref{THM_3.1} through \ref{THM_3.3}. This suggests that corresponding to each $a\in[0.5,1)$, a good choice of $\tau$ should be $\tau=(\frac{p_n}{n}\sqrt{\log(n/p_n)})^{\frac{1}{2a}}$ which should also depend on the kind of prior distributions that are in use. It should, however, be remembered that such choices of the global shrinkage parameter $\tau$ lack any physical interpretation as compared to the proportion of non-zero means $\frac{p_n}{n}$. However, from both theoretical and practical view point, one would perhaps be more interested in using priors with $a=0.5$, such as, the horseshoe or the standard double Pareto prior. In such situations, like \cite{PKV2014}, we also recommend using $\tau=\frac{p_n}{n}\sqrt{\log(n/p_n)}$ as a default option.\newline


To obtain a better insight about the spread of the posterior distribution around these estimators and the effect of choosing different values of $\tau$, let us confine our attention to the case when the function $L(\cdot)$ in (\ref{LAMBDA_PRIOR}) is non-decreasing over $(0,\infty)$ with $a=0.5$. The corresponding subclass includes the generalized double Pareto priors with shape parameter $\alpha=1$, the three parameter beta normal mixtures with parameters $a=0.5$ and $b>0$, the inverse gamma prior with shape parameter $\alpha=0.5$ and many more. See Section 2 of \citet{GTGC2015} in this context. The next theorem gives a lower bound to the total posterior variance corresponding to this sub-family of priors and that provides more insight into the effect of choosing $\tau$ depending on $\frac{p_n}{n}$. 

\begin{thm}\label{THM_3.5}
 Suppose $X \sim \mathcal{N}_{n}(\theta_{0},I_{n})$ and $\theta_{0} \in l_{0}[p_{n}]$. Further assume that the function $L(\cdot)$ given by (\ref{LAMBDA_PRIOR}) satisfies Assumption \ref{ASSUMPTION_LAMBDA_PRIOR} and is non-decreasing over $(0,\infty)$. Then for $a=\frac{1}{2}$, the variance of the posterior distribution corresponding to the general class of shrinkage priors, satisfies
 \begin{equation}\label{LOWER_BND_POST_VAR}
  \sum_{i=1}^{n} E_{\theta_{0}} Var(\theta_{0i}|X_i) \gtrsim (n-p_n)\tau\sqrt{\log\big(\frac{1}{\tau}\big)},
\end{equation}
 if $\tau \rightarrow 0$, as $n\rightarrow\infty$, $p_n\rightarrow \infty$ and $p_n=o(n)$.
\end{thm}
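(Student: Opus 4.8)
The plan is to reduce the total posterior variance to a single ``null'' coordinate and to extract the lower bound from the fluctuation of the posterior mean rather than from the averaged conditional variance. Since $\theta_0\in l_0[p_n]$ has at least $n-p_n$ zero coordinates and each summand is nonnegative, it suffices to show that for one coordinate with $\theta_{0i}=0$ (so that $X_i\sim N(0,1)$) one has $E_{\theta_{0i}=0}Var(\theta_i|X_i)\gtrsim \tau\sqrt{\log(1/\tau)}$; discarding the $p_n$ signal coordinates and summing over the null ones then yields (\ref{LOWER_BND_POST_VAR}). Writing $\kappa_i=1/(1+\lambda_i^2\tau^2)$ and using $\theta_i\mid(X_i,\kappa_i)\sim N((1-\kappa_i)X_i,\,1-\kappa_i)$, the law of total variance gives
\begin{equation*}
Var(\theta_i|X_i)=E(1-\kappa_i|X_i)+X_i^2\,Var(\kappa_i|X_i).
\end{equation*}
A preliminary inspection shows that the first term, once integrated against the $N(0,1)$ law of $X_i$, contributes only order $\tau$, so the whole $\sqrt{\log(1/\tau)}$ factor must come from the second (posterior-mean) term. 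Accordingly I will lower bound $Var(\theta_i|X_i)\ge X_i^2\,Var(\kappa_i|X_i)$ and work with this.

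Next I would pass to an explicit ratio-of-integrals representation. With $a=\tfrac12$ the substitution $s=\lambda_i^2\tau^2$ (so $1-\kappa_i=s/(1+s)$) shows that the posterior density of $s$ is proportional to $g(s)=s^{-3/2}(1+s)^{-1/2}\exp\{-X_i^2/(2(1+s))\}\,L(s/\tau^2)$, the powers of $\tau$ cancelling in every ratio. I would then use
\begin{equation*}
Var(\kappa_i|X_i)\ge E\big((1-\kappa_i)^2\mid X_i\big)-\big(E(1-\kappa_i|X_i)\big)^2,
\end{equation*}
and argue that on the relevant range of $X_i$ the subtracted square is of strictly smaller order, so that $X_i^2\,Var(\kappa_i|X_i)\gtrsim X_i^2\,E((1-\kappa_i)^2|X_i)$ with $E((1-\kappa_i)^2|X_i)=\int_0^\infty (s/(1+s))^2 g(s)\,ds\big/\int_0^\infty g(s)\,ds$.

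The heart of the argument is a pointwise lower bound on $X_i^2\,Var(\kappa_i|X_i)$, which I expect to be the step isolated as a separate lemma. For the numerator I would restrict to $s$ bounded away from $0$, where $L(s/\tau^2)\ge c_0$ for small $\tau$ by Assumption \ref{ASSUMPTION_LAMBDA_PRIOR}, and obtain $\int (s/(1+s))^2 g(s)\,ds\gtrsim X_i^{-2}$ for large $X_i$. For the normalizing constant the singular factor $s^{-3/2}$ forces the mass near $s\sim\tau^2$, where $\exp\{-X_i^2/(2(1+s))\}\approx e^{-X_i^2/2}$; the change of variable $t=s/\tau^2$, together with the finiteness of $\int_0^\infty t^{-3/2}L(t)\,dt$ (guaranteed by the boundedness and monotonicity of $L$), gives the sharp upper bound $\int_0^\infty g(s)\,ds\lesssim \tau^{-1}e^{-X_i^2/2}$. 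Combining the two, for $X_i$ in a window $|X_i|\le c\sqrt{\log(1/\tau)}$ with $c<\sqrt2$ (so that $\tau e^{X_i^2/2}\le1$ and the dropped square term is negligible), I get $X_i^2\,Var(\kappa_i|X_i)\gtrsim \tau e^{X_i^2/2}$.

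Finally I would integrate this pointwise bound against the standard normal density of $X_i$ over the window $A$: the factor $e^{-x^2/2}$ exactly cancels the $e^{x^2/2}$, leaving $E_{\theta_{0i}=0}Var(\theta_i|X_i)\gtrsim \tau\int_A dx\asymp \tau\sqrt{\log(1/\tau)}$, and summing over the $n-p_n$ null coordinates gives (\ref{LOWER_BND_POST_VAR}). The main obstacle is the pointwise step: one must simultaneously lower bound the numerator and establish the \emph{sharp} upper bound $\lesssim\tau^{-1}e^{-X^2/2}$ on the normalizing constant (which rests entirely on the small-$s$ analysis and on the monotonicity and boundedness of $L$ in Assumption \ref{ASSUMPTION_LAMBDA_PRIOR}), and one must verify that $X^2(E(1-\kappa|X))^2$ is of smaller order so that the \emph{variance}, and not merely the second moment, is bounded below. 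Calibrating the window so that this bound is valid while still having length of order $\sqrt{\log(1/\tau)}$ is what pins down the exact factor in (\ref{LOWER_BND_POST_VAR}).
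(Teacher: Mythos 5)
Your proposal is correct and follows essentially the same route as the paper's proof: lower-bound $Var(\theta_i|X_i)$ by $X_i^2\,Var(\kappa_i|X_i)$ via the law of total variance (the paper drops the nonnegative term $E(1-\kappa_i|X_i)$ from its identity (\ref{VAR_IDNT_2})), control the resulting ratio of integrals on a window $|x|\le c\sqrt{\log(1/\tau)}$ with $c<\sqrt{2}$ --- this is exactly the content of the paper's Lemma \ref{LEMMA_POST_VAR_LOWER_BOUND}, written there in the variable $t=\lambda_i^2$ rather than your $s=\lambda_i^2\tau^2$ --- and then integrate the pointwise bound $\gtrsim \tau e^{x^2/2}$ against the $N(0,1)$ density over the at least $n-p_n$ null coordinates, which is the finishing argument the paper borrows from Theorem 3.4 of \citet{PKV2014}. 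The only cosmetic difference is that you lower-bound $L$ by $c_0$ using Assumption \ref{ASSUMPTION_LAMBDA_PRIOR} where the paper's lemma invokes monotonicity ($L(t)\ge L(1)$ for $t\ge 1$), and your side remark that the dropped term contributes only order $\tau$ is inessential (and slightly off --- it is $O(\tau\sqrt{\log(1/\tau)})$), since discarding a nonnegative term is valid regardless.
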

\begin{proof}
 See Appendix.
\end{proof}

Observe that the bounds in Theorems \ref{THM_3.2} and \ref{THM_3.5} coincide with each other both when $\tau=\frac{p_n}{n}\sqrt{\log(n/p_n)}$ and $\tau=\frac{p_n}{n}$. However, $\tau=\frac{p_n}{n}$ yields a rate that misses the minimax risk in (\ref{MINIMAX_RATE}) by the factor $\sqrt{\log(n/p_n)}$. On the other hand, the lower bound in (\ref{LOWER_BND_POST_VAR}) is of the order of the minimax risk in (\ref{MINIMAX_RATE}) for $\tau=\frac{p_n}{n}\sqrt{\log(n/p_n)}$. This again indicates that for this sub-family of priors, the corresponding posterior distributions contract around both the true mean vector and the corresponding Bayes estimates when $\tau=\frac{p_n}{n}\sqrt{\log(n/p_n)}$.

\section{Discussion}
We studied in this paper various theoretical properties of a general class of heavy-tailed continuous shrinkage priors in terms of the quadratic minimax risk for estimating a multivariate normal mean vector which is known to be sparse in the sense of being nearly black. It is shown that Bayes estimators arising out of this general class asymptotically attain the minimax risk in the $l_2$ norm possibly up to some multiplicative constants. Optimal rate of posterior contraction of these prior distributions in terms of the corresponding quadratic minimax rate has also been established. We provided a unifying theoretical treatment that holds for a very broad class of shrinkage priors including some well-known prior distributions such as the horseshoe prior, the normal-exponential-gamma priors, the three parameter beta normal priors, the generalized double Pareto priors, the inverse gamma priors and many others. Another major contribution of this work is to show that shrinkage priors which are appropriately heavy-tailed (already defined in Section 2) are good enough in order to attain the minimax optimal rate of contraction, provided that the global tuning parameter is carefully chosen, a question that was posed in \citet{PKV2014}. However, as commented in \citet{PKV2014}, for a full Bayes treatment by using a hyperprior for the global tuning parameter $\tau$, a pole at the origin may be required. As we already mentioned in Section 3 that one possible reason for such good performance of the kind of one-group shrinkage priors studied in this paper, is their ability to shrink the noise observations back to the origin, while leaving the large signals mostly unshrunk. Moreover, choice of the hyperparameter $a$ also plays a significant role for optimal posterior contraction of these priors. It should be noted that the range $[\frac{1}{2},1)$ of the hyperparameter $a$ is in concordance with that obtained in the context of multiple testing considered in \citet{GTGC2015}. The present work together with \citet{GTGC2015}, suggests that $a=0.5$ should be a better choice over values of $a\in(0.5,1)$ in the definition of $\pi(\lambda_i^2)$ in (\ref{LAMBDA_PRIOR}). We believe that the theoretical results in this paper can be extended further for a more general class of one-group priors through careful exploitation of properties of general slowly varying functions like those considered in \citet{GTGC2015}. We observed that (though not reported in this paper) when the number of non-zero means is unknown, the Bayes estimators based on this general class of one-group priors, combined with the empirical Bayes estimate of the global shrinkage parameter as suggested in \citet{PKV2014}, still attain the minimax risk up to a multiplicative constant in the $l_2$ norm. This follows quite easily using Lemma \ref{LEMMA_MOST_IMP} and the arguments used in the proof of Theorem 4.1 and Lemma A.7 in \citet{PKV2014}. In this sense, the present work can be considered as an extension of the posterior concentration properties for the horseshoe prior obtained by \citet{PKV2014} over a very large class of global-local scale mixture of normals. However, a more interesting and natural question is whether the kind of one-group priors studied in this work retain such optimal contraction properties if a hyperprior is assigned to the global shrinkage parameter $\tau$. This requires altogether different kinds of techniques and arguments as opposed to those used in this paper. We hope to address this problem elsewhere in future.

%
\appendix
\section*{Appendix}
\label{app}

\subsection{Proofs}
\begin{lem}\label{LEM_MOMENT_INEQ}
For the general class of shrinkage priors (\ref{LAMBDA_PRIOR}) satisfying Assumption \ref{ASSUMPTION_LAMBDA_PRIOR} the following holds true for any $0 <  a < 1$:
  \begin{equation}\label{KAPPA_EXP}
 E(1-\kappa \big| x,\tau) \leqslant \frac{KM}{a(1-a)} e^{\frac{x^2}{2}}\tau^{2a}(1+o(1)), \mbox{ each fixed $x \in \mathbb{R}$,} \nonumber
  \end{equation}
 where $\kappa=\frac{1}{1+\lambda^2\tau^2}$ denote the shrinkage coefficients and the $o(1)$ term depends only on $\tau^2$ such that $\lim_{\tau \rightarrow 0}o(1)=0$.
\end{lem}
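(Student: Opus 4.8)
The plan is to turn $E(1-\kappa\mid x,\tau)$ into an explicit ratio of one-dimensional integrals over the local parameter and then bound the numerator from above and the denominator from below. First I would integrate out $\theta$ in the hierarchy, so that $X\mid(\lambda^2,\tau^2)\sim N(0,1+\lambda^2\tau^2)$, and note that $1-\kappa=\lambda^2\tau^2/(1+\lambda^2\tau^2)$. Writing the posterior of $\lambda^2$ as proportional to the $N(0,1+\lambda^2\tau^2)$ density times $\pi(\lambda^2)$ and cancelling the common factor $(2\pi)^{-1/2}$, one gets
$$E(1-\kappa\mid x,\tau)=\frac{\tilde N(x,\tau)}{\tilde D(x,\tau)},$$
where $\tilde N=\int_0^\infty \frac{\lambda^2\tau^2}{(1+\lambda^2\tau^2)^{3/2}}\,e^{-x^2/\{2(1+\lambda^2\tau^2)\}}\pi(\lambda^2)\,d\lambda^2$ and $\tilde D$ is the same integral with the factor $\lambda^2\tau^2/(1+\lambda^2\tau^2)$ removed (so that its power of $1+\lambda^2\tau^2$ is $-1/2$ instead of $-3/2$).

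Next I would substitute $u=\lambda^2\tau^2$, which turns the prior mass element into $\pi(\lambda^2)\,d\lambda^2=K\,u^{-a-1}\tau^{2a}L(u/\tau^2)\,du$ and isolates the factor $\tau^{2a}$; this is the source of the $\tau^{2a}$ in the bound. For the numerator I would discard the exponential (bounding it by $1$) and invoke the uniform bound $L\le M$ from Assumption \ref{ASSUMPTION_LAMBDA_PRIOR}, obtaining $\tilde N\le KM\tau^{2a}\int_0^\infty u^{-a}(1+u)^{-3/2}\,du$. The remaining integral is elementary: splitting at $u=1$ and using $(1+u)^{-3/2}\le 1$ on $(0,1)$ and $(1+u)^{-3/2}\le u^{-3/2}$ on $(1,\infty)$ gives $\int_0^\infty u^{-a}(1+u)^{-3/2}\,du\le \frac{1}{1-a}+\frac{1}{a+1/2}\le \frac{1}{1-a}+\frac1a=\frac{1}{a(1-a)}$, which produces exactly the constant $KM/\{a(1-a)\}$.

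For the denominator I would show that its leading behaviour is $e^{-x^2/2}$ as $\tau\to0$. Substituting $u=v\tau^2$ in $\tilde D$ cancels $\tau^{2a}$ and yields $\tilde D=K\int_0^\infty v^{-a-1}(1+v\tau^2)^{-1/2}e^{-x^2/\{2(1+v\tau^2)\}}L(v)\,dv$. Using the pointwise inequality $e^{-x^2/\{2(1+v\tau^2)\}}\ge e^{-x^2/2}$, then monotone convergence of $(1+v\tau^2)^{-1/2}\uparrow 1$, together with the fact that $K$ is the normalizing constant so that $K\int_0^\infty v^{-a-1}L(v)\,dv=1$, gives $\tilde D\ge e^{-x^2/2}\,K\int_0^\infty v^{-a-1}(1+v\tau^2)^{-1/2}L(v)\,dv=e^{-x^2/2}\,(1-\eta(\tau))$, where $\eta(\tau)\downarrow0$ and, crucially, does not depend on $x$. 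Dividing the numerator bound by this denominator bound yields $E(1-\kappa\mid x,\tau)\le \frac{KM}{a(1-a)}\,e^{x^2/2}\tau^{2a}\,(1-\eta(\tau))^{-1}$, and taking $o(1)=\eta(\tau)/(1-\eta(\tau))$, which depends only on $\tau^2$, completes the argument.

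The delicate step is the denominator, where the clean limit $e^{-x^2/2}$ and the $x$-uniformity of the $o(1)$ must be secured simultaneously. The key realization is that after the rescaling $u=v\tau^2$ the denominator is, to leading order, the prior's own normalizing integral multiplied by $e^{-x^2/2}$, so the global lower bound $e^{-x^2/\{2(1+v\tau^2)\}}\ge e^{-x^2/2}$ (valid for every $x$) combined with monotone convergence delivers a correction factor $1-\eta(\tau)$ that is free of $x$. I would also remark that convergence of $\int_0^\infty v^{-a-1}L(v)\,dv$ is precisely the properness of $\pi(\lambda^2)$; in the degenerate case where that integral diverges the denominator is infinite and $E(1-\kappa\mid x,\tau)=0$, so the asserted bound holds trivially.
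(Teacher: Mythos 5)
Your proof is correct and self-contained. The paper itself gives no proof of Lemma \ref{LEM_MOMENT_INEQ}, deferring instead to \citet{GTGC2015}; your route --- writing $E(1-\kappa\mid x,\tau)$ as a ratio of integrals, extracting $\tau^{2a}$ via $u=\lambda^2\tau^2$, bounding the numerator with $L\leqslant M$ and $e^{-x^2/\{2(1+u)\}}\leqslant 1$ together with the elementary estimate $\int_0^\infty u^{-a}(1+u)^{-3/2}\,du\leqslant \frac{1}{1-a}+\frac{1}{a+1/2}\leqslant \frac{1}{a(1-a)}$, and bounding the denominator below by $e^{-x^2/2}\,K\int_0^\infty v^{-a-1}(1+v\tau^2)^{-1/2}L(v)\,dv=e^{-x^2/2}(1+o(1))$ with the correction free of $x$ --- is precisely the technique this paper uses for the analogous term $J(x,\tau)$ in Lemma \ref{LEMMA_IMPORTANT_INTEGRAL}, where the same normalizing-constant identity $K^{-1}=\int_0^\infty t^{-a-1}L(t)\,dt$ is invoked (there via dominated rather than monotone convergence, an immaterial difference). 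Your closing remark on properness is also consistent with the paper's standing assumption that $K\in(0,\infty)$ is a genuine constant of proportionality.
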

\begin{proof}
 See \citet{GTGC2015}.
\end{proof}

\begin{lem}\label{LEM_CONCENTRATION_INEQ}
For every fixed $\tau > 0$, and each fixed $\eta, \delta \in (0,1),$ the posterior distribution of the shrinkage coefficients $\kappa=1/(1+\lambda^2\tau^2)$ based on the general class of shrinkage priors (\ref{LAMBDA_PRIOR}) satisfying Assumption \ref{ASSUMPTION_LAMBDA_PRIOR}, with $a > 0$, satisfies the following concentration inequality:
  \begin{eqnarray}
   \Pr(\kappa > \eta|x,\tau) &\leqslant& \frac{H(a,\eta,\delta)e^{-\frac{\eta(1-\delta)x^{2}}{2}}}{\tau^{2a}\Delta(\tau^2,\eta,\delta)}, \mbox{ uniformly in } x \in \mathbb{R},\nonumber\\
   \nonumber\\
  \mbox{where }  \Delta(\tau^2,\eta,\delta)&=&\xi(\tau^2,\eta,\delta)L\big(\frac{1}{\tau^2}(\frac{1}{\eta\delta}-1)\big),\nonumber\\ \nonumber\\
    \xi(\tau^2,\eta,\delta) &=& \frac{\int_{\frac{1}{\tau^2}\big(\frac{1}{\eta\delta}-1\big)}^{\infty}t^{-(a+\frac{1}{2}+1)}L(t)dt}{(a+\frac{1}{2})^{-1} \big(\frac{1}{\tau^2}\big(\frac{1}{\eta\delta}-1\big)\big)^{-(a+\frac{1}{2})}L(\frac{1}{\tau^2}\big(\frac{1}{\eta\delta}-1\big))},\quad  \mbox{and}\nonumber\\
    \nonumber\\
     H(a,\eta,\delta) &=& \frac{(a+\frac{1}{2}) (1-\eta\delta)^a}{ K(\eta\delta)^{(a+\frac{1}{2})}},\nonumber
  \end{eqnarray}
 where the term $\Delta(\tau^2,\eta,\delta)$ is such that $\lim_{\tau \rightarrow 0}\Delta(\tau^2,\eta,\delta)$ is a finite positive quantity for every fixed $\eta \in (0,1)$ and every fixed $\delta \in (0,1)$.
\end{lem}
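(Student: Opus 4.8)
The plan is to pass from the shrinkage coefficient $\kappa$ to the local parameter $t=\lambda^2$, since the event $\{\kappa>\eta\}$ is just the range $\{t<T\}$ with $T=\frac{1}{\tau^2}\big(\frac{1}{\eta}-1\big)$. Writing the marginal density of $x$ given $t$ as $m(x\mid t,\tau)=\frac{1}{\sqrt{2\pi(1+t\tau^2)}}\exp\{-x^2/(2(1+t\tau^2))\}$ (the $N(0,1+t\tau^2)$ density) and the prior as $\pi(t)=Kt^{-a-1}L(t)$, Bayes' theorem gives
\[
\Pr(\kappa>\eta\mid x,\tau)=\frac{\int_0^{T}m(x\mid t,\tau)\pi(t)\,dt}{\int_0^{\infty}m(x\mid t,\tau)\pi(t)\,dt}.
\]
I would bound the numerator from above and the denominator from below, deliberately restricting the denominator to the tail $[t_*,\infty)$ with the inflated threshold $t_*=\frac{1}{\tau^2}\big(\frac{1}{\eta\delta}-1\big)>T$, so that the parameter $\delta$ enters.

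For the numerator, on $(0,T)$ one has $1+t\tau^2<1/\eta$, hence $m(x\mid t,\tau)\le\frac{1}{\sqrt{2\pi}}e^{-\eta x^2/2}$ for every $x$; since $K$ is the normalizing constant of the prior, $\int_0^{T}\pi(t)\,dt\le\int_0^\infty\pi(t)\,dt=1$, so $\int_0^{T}m\,\pi\,dt\le\frac{1}{\sqrt{2\pi}}e^{-\eta x^2/2}$. This is precisely the step that produces the factor $K$ in $H(a,\eta,\delta)$. For the denominator I would keep only $[t_*,\infty)$ and use two elementary estimates there: first $1+t\tau^2\ge 1/(\eta\delta)$, giving $\exp\{-x^2/(2(1+t\tau^2))\}\ge e^{-\eta\delta x^2/2}$; and second $1+t\tau^2\le (t\tau^2)/(1-\eta\delta)$ (which holds because $\frac{1}{t\tau^2}\le\frac{1}{t_*\tau^2}=\frac{\eta\delta}{1-\eta\delta}$ on $[t_*,\infty)$), giving $(1+t\tau^2)^{-1/2}\ge(1-\eta\delta)^{1/2}(t\tau^2)^{-1/2}$. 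Combining these with $\pi(t)=Kt^{-a-1}L(t)$ yields
\[
\int_0^{\infty}m\,\pi\,dt\ge\frac{K}{\sqrt{2\pi}}(1-\eta\delta)^{1/2}\tau^{-1}e^{-\eta\delta x^2/2}\int_{t_*}^{\infty}t^{-a-\frac{3}{2}}L(t)\,dt.
\]

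Taking the ratio, the two exponentials combine to $e^{-\eta(1-\delta)x^2/2}$ uniformly in $x$, which is exactly the claimed $x$-dependence, and the rest is bookkeeping. Substituting $t_*=\frac{1-\eta\delta}{\eta\delta\,\tau^2}$ and checking the identity $\Delta(\tau^2,\eta,\delta)=\xi(\tau^2,\eta,\delta)L(t_*)=(a+\tfrac{1}{2})\,t_*^{\,a+1/2}\int_{t_*}^{\infty}t^{-a-3/2}L(t)\,dt$, one verifies that the derived bound equals $H(a,\eta,\delta)e^{-\eta(1-\delta)x^2/2}/(\tau^{2a}\Delta(\tau^2,\eta,\delta))$. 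Finally, to see that $\lim_{\tau\to0}\Delta$ is finite and positive, note $t_*\to\infty$ as $\tau\to0$, so by Karamata's theorem for the slowly varying $L$ (with exponent $a+\tfrac{3}{2}>1$) the tail satisfies $\int_{t_*}^{\infty}t^{-a-3/2}L(t)\,dt\sim\frac{1}{a+1/2}t_*^{-(a+1/2)}L(t_*)$, whence $\xi\to1$; together with Assumption \ref{ASSUMPTION_LAMBDA_PRIOR}(1), which forces $L(t_*)\to L(\infty)\in(0,\infty)$, this gives $\Delta\to L(\infty)\in(0,\infty)$.

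The main obstacle is the denominator estimate: one must recognise that the correct comparison region is the tail $[t_*,\infty)$ with the inflated constant $\eta\delta$, and then extract simultaneously the right exponential loss $e^{-\eta\delta x^2/2}$ and the conversion of $(1+t\tau^2)^{-1/2}$ into $t^{-1/2}$ up to the explicit factor $(1-\eta\delta)^{1/2}$. Everything else — the numerator bound, the cancellation of the exponentials, and the identification of $\xi$ and $\Delta$ — is routine once this region and the role of $\delta$ are fixed, with Karamata's theorem supplying the only genuinely analytic input.
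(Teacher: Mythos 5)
Your proposal is correct, and it is essentially the argument the paper relies on: the paper itself defers this proof to \citet{GTGC2015}, and your reconstruction — bounding the numerator by $\sup_{t<T}e^{-x^2/(2(1+t\tau^2))}\le e^{-\eta x^2/2}$ times the total prior mass, lower-bounding the denominator by restricting to the inflated tail $t\ge\frac{1}{\tau^2}\big(\frac{1}{\eta\delta}-1\big)$ with the two elementary estimates on $1+t\tau^2$, and then invoking Karamata plus Assumption \ref{ASSUMPTION_LAMBDA_PRIOR}(1) for the limit of $\Delta$ — is exactly that proof (modulo working in the $t=\lambda^2$ scale rather than the equivalent $\kappa$-scale), as confirmed by your recovery of the precise constants $H(a,\eta,\delta)$, $\xi(\tau^2,\eta,\delta)$ and $\Delta(\tau^2,\eta,\delta)$ appearing in the lemma statement.
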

\begin{proof}
 See \citet{GTGC2015}.
\end{proof}


\begin{lem}\label{LEMMA_MOST_IMP}
Let us consider the general class of shrinkage priors (\ref{LAMBDA_PRIOR}) satisfying Assumption \ref{ASSUMPTION_LAMBDA_PRIOR}, with $a>0$. Then, for $0 < \tau^2 < 1$ and given any $c > 2$, the absolute difference between the Bayes estimators $T_{\tau}(x) $ based on the aforesaid class of shrinkage priors and an observation $x$, can be bounded above by a real valued function $h(\cdot,\tau)$, depending on $c$, and satisfying the following:\\
 
For any $\rho > c$,
\begin{eqnarray}
 \lim_{\tau \downarrow 0}\sup_{\mid x \mid > \sqrt{\rho \log\big(\frac{1}{\tau^{2a}}\big)}} h(x,\tau) = 0. \nonumber
 \end{eqnarray}
\end{lem}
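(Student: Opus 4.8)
The plan is to begin from the closed form (\ref{POST_MEAN_OG2}) of the Bayes estimator. Writing $\kappa=1/(1+\lambda^2\tau^2)$, the relation $T_\tau(X_i)=(1-E(\kappa_i\mid X_i,\tau^2))X_i$ gives the pointwise identity $T_\tau(x)-x=-x\,E(\kappa\mid x,\tau)$, so that
\[
|T_\tau(x)-x|=|x|\,E(\kappa\mid x,\tau).
\]
It therefore suffices to take $h(x,\tau)$ to be any dominating function of $|x|\,E(\kappa\mid x,\tau)$ and to show that $E(\kappa\mid x,\tau)$ decays like $x^{-2}$ on the region of interest. Indeed, if $E(\kappa\mid x,\tau)\lesssim x^{-2}$ uniformly for $|x|>\sqrt{\rho\log(1/\tau^{2a})}$ and all small $\tau$, then $|x|\,E(\kappa\mid x,\tau)\lesssim|x|^{-1}$, whose supremum over that region is of order $(\rho\log(1/\tau^{2a}))^{-1/2}\to0$ as $\tau\downarrow0$. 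This also makes precise the strong tail--robustness statement ($E(\theta_i\mid X_i,\tau^2)\approx X_i$ for large $X_i$) mentioned after Lemma \ref{LEM_CONCENTRATION_INEQ}.

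To estimate $E(\kappa\mid x,\tau)$ I would pass to the induced posterior law of $\kappa$ on $(0,1)$. Using $X\mid\kappa\sim N(0,1/\kappa)$ and the change of variable $\lambda^2=(1-\kappa)/(\kappa\tau^2)$ in the prior (\ref{LAMBDA_PRIOR}), the posterior density of $\kappa$ is proportional to $\kappa^{a-1/2}(1-\kappa)^{-a-1}e^{-\kappa x^2/2}L\big(\tfrac{1-\kappa}{\kappa\tau^2}\big)$, whence
\[
E(\kappa\mid x,\tau)=\frac{\int_0^1\kappa^{a+\frac12}(1-\kappa)^{-a-1}e^{-\kappa x^2/2}L\big(\tfrac{1-\kappa}{\kappa\tau^2}\big)\,d\kappa}{\int_0^1\kappa^{a-\frac12}(1-\kappa)^{-a-1}e^{-\kappa x^2/2}L\big(\tfrac{1-\kappa}{\kappa\tau^2}\big)\,d\kappa}=:\frac{N(x,\tau)}{D(x,\tau)}.
\]
The numerator carries one extra power of $\kappa$ relative to the denominator; since $e^{-\kappa x^2/2}$ pushes the posterior mass towards $\kappa=0$ for large $|x|$, this extra power is exactly what produces the $x^{-2}$ gain. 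I note that one could instead try the concentration inequality of Lemma \ref{LEM_CONCENTRATION_INEQ} via $E(\kappa\mid x,\tau)\leq\eta+\Pr(\kappa>\eta\mid x,\tau)$, but the crude term $\eta|x|$ does not vanish on the region, so this route is too lossy and a direct analysis of the ratio $N/D$ is needed.

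The core of the argument is to bound $N$ above and $D$ below with constants uniform in $\tau$. For the lower bound I would restrict $D$ to $\kappa\in(0,\tfrac12)$, where $(1-\kappa)^{-a-1}\geq1$ and $\tfrac{1-\kappa}{\kappa\tau^2}\geq\tau^{-2}\geq t_0$ for $\tau$ small, so that $L\geq c_0$ by Assumption \ref{ASSUMPTION_LAMBDA_PRIOR}(1); the substitution $w=\kappa x^2/2$ then recognises an incomplete Gamma integral and yields $D\gtrsim (x^2)^{-(a+1/2)}$. For the upper bound I would split $N$ at $\kappa=\tfrac12$. On the bulk $\kappa\in(0,\tfrac12)$ the bounds $(1-\kappa)^{-a-1}\leq2^{a+1}$ and $L\leq M$ (Assumption \ref{ASSUMPTION_LAMBDA_PRIOR}(2)), with the same substitution, give a contribution of order $(x^2)^{-(a+3/2)}$, so that this piece divided by $D$ is $\lesssim x^{-2}$, as required.

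The delicate part, and the step I expect to be the main obstacle, is the tail $\kappa\in(\tfrac12,1)$: here the crude bound $L\leq M$ is useless because $\int^1(1-\kappa)^{-a-1}\,d\kappa$ diverges, so one must instead exploit the Gaussian factor $e^{-\kappa x^2/2}$ together with the finiteness of the total prior mass (which supplies the factor $\tau^{-2a}$). A careful treatment shows this tail is exponentially small, of order $e^{-x^2/2}\tau^{-2a}$ up to constants, so that its contribution to $|x|\,E(\kappa\mid x,\tau)$ is at most a polynomial in $\log(1/\tau^{2a})$ times $(\tau^{2a})^{\rho/2-1}$ on the region $|x|>\sqrt{\rho\log(1/\tau^{2a})}$; this tends to $0$ precisely because $\rho>c>2$, which is exactly the role of the hypothesis $c>2$. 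Collecting the two pieces gives $E(\kappa\mid x,\tau)\lesssim x^{-2}$ uniformly on the region, so $h(x,\tau)$ may be taken as the resulting explicit majorant of $|x|\,E(\kappa\mid x,\tau)$, and $\sup_{|x|>\sqrt{\rho\log(1/\tau^{2a})}}h(x,\tau)\to0$. I expect the genuine difficulty to lie in the sharp extraction of the $e^{-x^2/2}$ factor in the tail and in keeping every constant uniform in $\tau$ and over $x$ in the region.
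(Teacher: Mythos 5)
Your skeleton is the same as the paper's: the identity $|T_\tau(x)-x|=|x|\,E(\kappa\mid x,\tau)$, a split of the $\kappa$-integral into a bulk piece and a piece near $\kappa=1$, an incomplete-gamma analysis of the bulk giving a $\tau$-free majorant of order $1/|x|$, and a bound of the form $\tau^{-2a}e^{-\text{const}\cdot x^2}$ for the piece near $1$ (the paper's $I_1$ and $I_2$, with $I_2$ controlled by the concentration inequality of Lemma \ref{LEM_CONCENTRATION_INEQ}). However, your treatment of the tail piece contains a genuine error. Having fixed the split at $\kappa=\tfrac12$, you claim the tail is of order $e^{-x^2/2}\tau^{-2a}$. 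This cannot be extracted from the Gaussian factor: for $\kappa\in(\tfrac12,1)$ one has $e^{-\kappa x^2/2}>e^{-x^2/2}$ (the inequality goes the wrong way), and the best uniform bound on that interval is $e^{-x^2/4}$. Indeed the claimed bound is false: restricting to $\kappa\in(\tfrac12,\tfrac35)$, where the argument of $L$ lies in $(\tfrac23\tau^{-2},\tau^{-2})$ so that $L\geqslant c_0$ for small $\tau$, the tail integral is bounded below by a constant times $e^{-3x^2/10}$, which for fixed small $\tau$ exceeds any multiple of $e^{-x^2/2}\tau^{-2a}$ once $|x|$ is large. With the honest factor $e^{-x^2/4}$, the supremum of the tail contribution over $|x|>\sqrt{\rho\log(1/\tau^{2a})}$ behaves like $\big(\rho\log(1/\tau^{2a})\big)^{a+1}(\tau^{2a})^{\rho/4-1}$, which tends to zero only when $\rho>4$. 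So your argument proves the lemma only for $c\geqslant 4$, not for an arbitrary given $c>2$ as the statement requires.

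The repair is precisely the point of the paper's construction: the split point must depend on $c$. Splitting at $\kappa=\eta$ with $\eta$ close to $1$, the bulk constants ($(1-\eta)^{-a-1}$, etc.) remain finite for each fixed $\eta$, while the tail acquires the factor $e^{-\eta x^2/2}$ (the paper, working through Lemma \ref{LEM_CONCENTRATION_INEQ}, gets $e^{-\eta(1-\delta)x^2/2}$ with an extra slack parameter $\delta$ needed to control the prior-mass/Gaussian trade-off in the tail). The resulting boundary value is $(\tau^{2a})^{\frac{\eta(1-\delta)\rho}{2}-1}$ up to logarithmic factors, which vanishes exactly when $\rho>\frac{2}{\eta(1-\delta)}$; since $\frac{2}{\eta(1-\delta)}$ can be made equal to any prescribed $c>2$ by choosing $\eta$ near $1$ and $\delta$ near $0$, the lemma follows for every $\rho>c$. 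This is also why the threshold $2$ in the hypothesis $c>2$ is sharp for this method, and it is the one quantitative feature your fixed choice $\eta=\tfrac12$ loses.
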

\begin{proof}
By definition, 
\begin{eqnarray}
\mid T_{\tau}(x) - x\mid
&=& \mid \frac{x\int_{0}^{1} \kappa\cdot \kappa^{a+\frac{1}{2}-1}(1-\kappa)^{-a-1}L(\frac{1}{\tau^2}(\frac{1}{\kappa}-1))e^{-\kappa x^2/2}d\kappa}{\int_{0}^{1} \kappa^{a+\frac{1}{2}-1}(1-\kappa)^{-a-1}L(\frac{1}{\tau^2}(\frac{1}{\kappa}-1))e^{-\kappa x^2/2}d\kappa}\mid\nonumber\\
&=& I(x,\tau), \mbox{ say.}\nonumber
\end{eqnarray}
Fix $\eta \in (0,1) $ and $\delta \in (0,1).$\\

Observe that
\begin{equation}\label{IMP_DECOMPSOSITION}
 I(x,\tau) \leqslant I_{1}(x,\tau) + I_{2}(x,\tau)
\end{equation}
where $I_{1}(x,\tau) = \mid xE(\kappa 1\{\kappa < \eta \} \mid x,\tau^2) \mid \nonumber $ and $  I_{2}(x,\tau) = \mid xE(\kappa 1\{\kappa > \eta \} \mid x,\tau^2) \mid \nonumber. $\\

Now using the variable transformation $t=\frac{1}{\tau^2}(\frac{1}{\kappa}-1),$ we have the following:
\begin{eqnarray}
  I_{1}(x,\tau)
 &=& \mid \frac{x\int_{0}^{\eta} \kappa\cdot \kappa^{a+\frac{1}{2}-1}(1-\kappa)^{-a-1}L(\frac{1}{\tau^2}(\frac{1}{\kappa}-1))e^{-\kappa x^2/2}d\kappa}{\int_{0}^{1} \kappa^{a+\frac{1}{2}-1}(1-\kappa)^{-a-1}L(\frac{1}{\tau^2}(\frac{1}{\kappa}-1))e^{-\kappa x^2/2}d\kappa}\mid\nonumber\\
 &=& \mid \frac{x\int_{\frac{1}{\tau^2}(\frac{1}{\eta}-1)}^{\infty} \frac{1}{(1+t\tau^2)^{3/2}}t^{-a-1}L(t)e^{-\frac{x^2}{2(1+t\tau^2)}}dt}{\int_{0}^{\infty}\frac{1}{(1+t\tau^2)^{1/2}}t^{-a-1}L(t)e^{-\frac{x^2}{2(1+t\tau^2)}}dt}\mid\nonumber \\
 &\leqslant& \mid \frac{x\int_{\frac{1}{\tau^2}(\frac{1}{\eta}-1)}^{\infty}\frac{1}{(1+t\tau^2)^{3/2}}t^{-a-1}L(t)e^{-\frac{x^2}{2(1+t\tau^2)}}dt}{\int_{\frac{t_{0}}{\tau^2}}^{\infty}\frac{1}{(1+t\tau^2)^{1/2}}t^{-a-1}L(t)e^{-\frac{x^2}{2(1+t\tau^2)}}dt}\mid \mbox{}\nonumber\\
 &=& J_{1}(x,\tau)\mbox{ say,} \label{IMP_DECOMPSOSITION_1_1}
\end{eqnarray}

Next observe that $\frac{t_{0}}{\tau^2} > t_{0}$ as $\tau^2 < 1$. Hence $L(t) \geqslant c_{0}$ for every $t \geqslant \frac{t_{0}}{\tau^2}$. Also, the function $L$ is bounded by the constant $M >0$. Utilizing these two observations and using the variable transformation $u=\frac{x^2}{1+t\tau^2}$ in both the numerator and the denominator of $J_{1}(x,\tau)$ in (\ref{IMP_DECOMPSOSITION_1_1}), and writing $s=\frac{1}{1+t_{0}} \in (0,1)$, we see that the term $J_{1}(x,\tau)$ can be bounded above as follows:
\begin{eqnarray}
J_{1}(x,\tau)
&\leqslant& \frac{M}{c_{0}} \mid x \frac{\int_{0}^{\eta x^2}e^{-u/2}\big(\frac{u}{x^2}\big)^{3/2}\big(\frac{1}{\tau^2}\big(\frac{x^2}{u}-1\big)\big)^{-a-1} \frac{x^2}{\tau^2u^2}du}{\int_{0}^{sx^2}e^{-u/2}\big(\frac{u}{x^2}\big)^{1/2}\big(\frac{1}{\tau^2}\big(\frac{x^2}{u}-1\big)\big)^{-a-1}\frac{x^2}{\tau^2u^2}du} \mid \nonumber\\
&=& \frac{M}{c_{0}} \mid \frac{1}{x} \cdot \frac{\int_{0}^{\eta x^2}e^{-u/2}u^{a+3/2-1}\big(1-\frac{u}{x^2}\big)^{-a-1}du}{\int_{0}^{s x^2}e^{-u/2}u^{a+1/2-1}\big(1-\frac{u}{x^2}\big)^{-a-1}du} \mid\nonumber
\end{eqnarray}
Note that $0 < u < \eta x^2 \Rightarrow 0 < \frac{u}{x^2} < \eta < 1 \Rightarrow 1-\eta < 1 -\frac{u}{x^2} < 1.$ Similarly, $0 < u < s x^2 \Rightarrow 1-s < 1 -\frac{u}{x^2} < 1.$ Therefore we have,
\begin{eqnarray}
J_{1}(x,\tau)
 &\leqslant& \frac{M}{c_{0}(1-\eta)^{1+a}} \mid \frac{1}{x} \cdot \frac{\int_{0}^{\eta x^2}e^{-u/2}u^{a+3/2-1}du}{\int_{0}^{s x^2}e^{-u/2}u^{a+1/2-1}du} \mid  \nonumber\\
 &\leqslant& \frac{M}{c_{0}(1-\eta)^{1+a}} \mid \frac{1}{x} \cdot \frac{\int_{0}^{\infty}e^{-u/2}u^{a+3/2-1}du}{\int_{0}^{s x^2}e^{-u/2}u^{a+1/2-1}du} \mid  \nonumber\\
 &=& h_{1}(x,\tau)\mbox{ say},\label{IMP_DECOMPSOSITION_1_2}
\end{eqnarray}
where $h_{1}(x,\tau)=C_{\textasteriskcentered} \big[\mid x \int_{0}^{s x^2}e^{-u/2}u^{a+1/2-1}du \mid\big]^{-1}  $ for some $C_{\textasteriskcentered} \equiv C_{\textasteriskcentered}(a,\eta,L) > 0$ which is independent of both $x$ and $\tau$. Note that the function $h_{1}(x,\tau)$ is actually independent of $\tau$ and depends on $x$ only.\\

Next we observe that,
\begin{eqnarray}
I_{2}(x,\tau)
&=&\mid xE(\kappa 1\{\kappa > \eta \} \mid x,\tau^2) \mid \nonumber\\
&\leqslant& \mid x\Pr(\kappa > \eta\mid x,\tau^2) \mid\nonumber\\
&\leqslant& \mid x\frac{H(a,\eta,\delta)e^{-\frac{\eta(1-\delta) x^2}{2}}}{\tau^{2a}\Delta(\tau^2,\eta,\delta)} \mid \nonumber\\
&=& h_{2}(x,\tau)\mbox{ say},\label{IMP_DECOMPSOSITION_2}
\end{eqnarray}
 
Let $h(x,\tau) = h_{1}(x,\tau)+h_{2}(x,\tau)$. Therefore combining (\ref{IMP_DECOMPSOSITION}), (\ref{IMP_DECOMPSOSITION_1_1}), (\ref{IMP_DECOMPSOSITION_1_2}) and (\ref{IMP_DECOMPSOSITION_2}), we finally obtain for every $x \in \mathbb{R} $ and $\tau > 0$,
\begin{equation}
 \mid T_{\tau}(x)-x \mid \leqslant h(x,\tau).\label{UB_FUNCTION}
\end{equation}

Now observe that the function $h_{1}(x,\tau)$ is strictly decreasing in $\mid x \mid$. Therefore, for any fixed $\tau >0$ and every $\rho > 0$,
\begin{eqnarray}
 \sup_{\mid x \mid > \sqrt{\rho \log\big(\frac{1}{\tau^{2a}}\big)}} h_{1}(x,\tau)
 &\leqslant& C_{\textasteriskcentered} \big[\mid \sqrt{\rho \log\big(\frac{1}{\tau^{2a}}\big)} \int_{0}^{s\rho \log\big(\frac{1}{\tau^{2a}}\big)}e^{-u/2}u^{a+1/2-1}du \mid\big]^{-1} \nonumber
\end{eqnarray}
implying that 
\begin{equation}\label{LIMSUP_1}
 \lim_{\tau \downarrow 0}\sup_{\mid x \mid > \sqrt{\rho \log\big(\frac{1}{\tau^{2a}}\big)}} h_{1}(x,\tau) = 0.
\end{equation}

Again the function $h_{2}(x,\tau)$ is eventually decreasing in $|x|$. Therefore, for all sufficiently small $\tau > 0$,
\begin{equation}
 \sup_{\mid x \mid > \sqrt{\rho \log\big(\frac{1}{\tau^{2a}}\big)}} h_{2}(x,\tau) \leqslant h_{2}(\sqrt{\rho \log\big(\frac{1}{\tau^{2a}}\big)},\tau).\nonumber
\end{equation}

Let $\beta\equiv \beta(\eta,\delta) = \lim_{\tau \rightarrow 0} \Delta(\tau^2,\eta,\delta)$ for every fixed $\eta ,\delta \in (0,1)$. Then $ 0 < \beta < \infty$ which follows from Lemma \ref{LEM_CONCENTRATION_INEQ}. Then, 
\begin{eqnarray}
\lim_{\tau \rightarrow 0} h_{2}(\sqrt{\rho \log\big(\frac{1}{\tau^{2a}}\big)},\tau)
&=& \frac{1}{\beta} \lim_{\tau \rightarrow 0} \mid \tau^{-2a}\sqrt{\rho\log\big(\frac{1}{\tau^{2a}}\big)}e^{- \frac{\eta(1-\delta)}{2}\rho\log\big(\frac{1}{\tau^{2a}}\big)}\mid \nonumber\\
&=& \frac{\sqrt{\rho}}{\alpha} \lim_{\tau \rightarrow 0} \big(\tau^{2a})^{\frac{\eta(1-\delta)}{2}\big(\rho-\frac{2}{\eta(1-\delta)}\big)} \sqrt{\log\big(\frac{1}{\tau^{2a}}\big)}\nonumber\\
&=& \left\{ \begin{array}{rl}
0 &\mbox{ if $\rho > \frac{2}{\eta(1-\delta)} $} \\
\infty &\mbox{ otherwise,}
\end{array}\right.\nonumber
\end{eqnarray}

whence it follows that
\begin{eqnarray}
\lim_{\tau \rightarrow 0} \sup_{\mid x \mid > \sqrt{\rho \log\big(\frac{1}{\tau^{2a}}\big)}} h_{2}(x,\tau)
&=& \left\{ \begin{array}{rl}
0 &\mbox{ if $\rho > \frac{2}{\eta(1-\delta)} $} \\
\infty &\mbox{ otherwise,}
\end{array}\right.\label{LIMSUP_2}
\end{eqnarray}

Combining (\ref{LIMSUP_1}) and (\ref{LIMSUP_2}) together with the fact that 
$$\lim_{\tau \rightarrow 0} \sup_{\mid x \mid > \sqrt{\rho \log\big(\frac{1}{\tau^{2a}}\big)}} h(x,\tau) \leqslant \lim_{\tau \rightarrow 0} \sup_{\mid x \mid > \sqrt{\rho \log\big(\frac{1}{\tau^{2a}}\big)}} h_{1}(x,\tau) + \lim_{\tau \rightarrow 0} \sup_{\mid x \mid > \sqrt{\rho \log\big(\frac{1}{\tau^{2a}}\big)}} h_{2}(x,\tau) $$
it follows that
\begin{eqnarray}
\lim_{\tau \rightarrow 0} \sup_{\mid x \mid > \sqrt{\rho \log\big(\frac{1}{\tau^{2a}}\big)}} h(x,\tau)
&=& \left\{ \begin{array}{rl}
0 &\mbox{ if $\rho > \frac{2}{\eta(1-\delta)} $} \\
\infty &\mbox{ otherwise,}
\end{array}\right.\label{LIMSUP_FINAL}
\end{eqnarray}

Observe that by choosing $\eta$ appropriately close to 1 and $\delta$ close to 0, any real number larger than 2 can be expressed in the form $\frac{2}{\eta(1-\delta)}.$ For example, taking $\eta=\frac{5}{6}$ and $\delta=\frac{1}{5}$ we obtain $\frac{2}{\eta(1-\delta)}=3$. Hence, given $c > 2$, let us choose $0 < \eta, \delta < 1 $ such that $c=\frac{2}{\eta(1-\delta)}$. Clearly, the choice of $h(\cdot,\tau)$ depends on $c$. The preceding discussion, coupled with (\ref{UB_FUNCTION}) and (\ref{LIMSUP_FINAL}), completes the proof of Lemma \ref{LEMMA_MOST_IMP}.
\end{proof}

\paragraph{Remark A.1}
Observe that the function $h(\cdot,\tau)$ defined in the proof of Lemma \ref{LEMMA_MOST_IMP} satisfies the following:

\begin{equation}
 \lim_{|x| \rightarrow \infty}h(x,\tau)=0, \mbox{ for each fixed $\tau > 0$}.\nonumber
\end{equation}

This means (using Lemma \ref{LEMMA_MOST_IMP}) that for any fixed $\tau > 0$, we have,
\begin{equation}
 \lim_{|x| \rightarrow \infty}\mid T_{\tau}(x)-x \mid =0. \label{SIGNALS_UNSHRUNK}
\end{equation}

Equation (\ref{SIGNALS_UNSHRUNK}) above shows that for the general class of tail robust priors under consideration, large observations almost remain unshrunk no matter however small $\tau$ is.

\paragraph{Remark A.2}
It should be noted that the choice of the function $h$ as in Lemma \ref{LEMMA_MOST_IMP} is not unique since it depends on $c >1$. Moreover, some other function, say, $\widetilde{h}$, can easily be obtained through exploiting the integrals in the proof of Lemma \ref{LEMMA_MOST_IMP} in a different manner.

\paragraph{Proof of Theorem \ref{THM_3.1}}
\begin{proof}
Suppose that $X \sim \mathcal{N}_{n}(\theta,I_{n})$, $\theta \in l_{0}[p_{n}]$ and $\tilde{p_n}=\#\{i:\theta_i\neq 0\}$. Let us split the term $E_{\theta}||T_{\tau}(X)-\theta||^2=\sum\limits_{i=1}^{n} E_{\theta_{i}} \big(T_{\tau}(X_{i})-\theta_{i}\big)^2 $ into two parts as follows:
\begin{equation}\label{THM_3.1_MAIN_DECOMPOSITION}
\sum_{i=1}^{n} E_{\theta_{i}} \big(T_{\tau}(X_{i})-\theta_{i}\big)^2
= \sum_{i:\theta_i\neq 0} E_{\theta_{i}} \big(T_{\tau}(X_{i})-\theta_{i}\big)^2 + \sum_{i:\theta_i=0} E_{\theta_{i}} \big(T_{\tau}(X_{i})-\theta_{i}\big)^2
\end{equation}
We shall show now that the terms on the right hand side of (\ref{THM_3.1_MAIN_DECOMPOSITION}) can be bounded above by $\tilde{p_n}\log\big(\frac{1}{\tau^{2a}}\big)$ and $(n-\tilde{p_n})\tau^{2a}\sqrt{\log\big(\frac{1}{\tau^{2a}}\big)}$, respectively, up to some multiplicative constants, for all sufficiently small $\tau < 1$.\\

{\it Non-zero means:}\\

Let $\zeta_{\tau}=\sqrt{2\log\big(\frac{1}{\tau^{2a}}\big)}.$\\

Then,
\begin{eqnarray}
 E_{\theta_{i}}\big(T_{\tau}(X_i) - \theta_{i} \big)^2
 &=& E_{\theta_{i}}\big( \big(T_{\tau}(X_i) - X_i \big) + \big(X_i - \theta_{i}\big) \big)^2\nonumber\\
 &\leqslant& E_{\theta_{i}}\big(T_{\tau}(X_i) - X_i\big)^2+1+2 \sqrt{E_{\theta_{i}}\big(T_{\tau}(X_i) - X_i\big)^2} \nonumber\\
 &=& \bigg[\sqrt{E_{\theta_{i}}\big(T_{\tau}(X_i) - X_i\big)^2}+1\bigg]^2 \label{THM_3.1_UB1}
\end{eqnarray}
where the step preceding the final step in the above chain of inequalities follows from H\"{o}lder's inequality coupled with the fact $E_{\theta_{i}}\big(X_i - \theta_{i}\big)^2=1$.\\

%
Let us now fix any $c>1$ and choose any $\rho>c$. Then, using Lemma \ref{LEMMA_MOST_IMP}, there exists a non-negative real-valued function $h(\cdot,\tau)$, depending on $c$, such that
 \begin{equation}
  |T_{\tau}(x)-x| \leqslant h(x,\tau), \mbox{ for all } x \in \mathbb{R}\label{THM_3.1_LEM_4.3_UB1}
\end{equation}
and
 \begin{equation}
  \lim_{\tau \downarrow 0}\sup_{\mid x \mid > \rho \zeta_{\tau}} h(x,\tau) = 0.\label{THM_3.1_LEM_4.3_UB2}
\end{equation}

Once again, using the fact $\big(T_{\tau}(X_i) - X_i\big)^2 \leqslant X_i^2$, together with (\ref{THM_3.1_LEM_4.3_UB1}), we obtain,
\begin{eqnarray}
 E_{\theta_{i}}\big(T_{\tau}(X_i) - X_i\big)^2
 &=& E_{\theta_{i}}\big[(T_{\tau}(X_i) - X_i\big)^2 1\big\{|X_i|\leqslant \rho\zeta_{\tau}\big\}\big]\nonumber\\
 && + \mbox{ } E_{\theta_{i}}\big[(T_{\tau}(X_i) - X_i\big)^2 1\big\{|X_i|> \rho\zeta_{\tau}\big\}\big]\nonumber\\
 &\leqslant& \rho^2\zeta_{\tau}^2+\bigg(\sup_{|x|> \rho\zeta_{\tau}}h(x,\tau)\bigg)^2 \label{THM_3.1_UB2}
\end{eqnarray}
Now using (\ref{THM_3.1_LEM_4.3_UB2}) and the fact that $\zeta_{\tau}\rightarrow\infty$ as $\tau\rightarrow 0$, it follows that,
\begin{equation}
 \bigg(\sup_{|x|> \rho\zeta_{\tau}}h(x,\tau)\bigg)^2=o(\zeta_{\tau}^2).\label{THM_3.1_h_eq_o_zeta}
\end{equation}
On combining (\ref{THM_3.1_UB2}) and (\ref{THM_3.1_h_eq_o_zeta}), it follows that,
\begin{equation}
 E_{\theta_{i}}\big(T_{\tau}(X_i) - X_i\big)^2 \leqslant \rho^2\zeta_{\tau}^2(1+o(1)), \quad\mbox{as }\tau \rightarrow 0. \label{THM_3.1_UB_EXPCT_DIFF_SQ}
\end{equation}
Note that (\ref{THM_3.1_UB_EXPCT_DIFF_SQ}) holds uniformly for any $i$ such that $\theta_i \neq 0$, whence we have,
\begin{equation}
 \sum_{i:\theta_i\neq 0} E_{\theta_{i}} \big(T_{\tau}(X_{i})-\theta_{i}\big)^2 \lesssim \tilde{p_n}\zeta_{\tau}^2, \quad \mbox{as }\tau \rightarrow 0.\label{THM_3.1_UB_NONZERO_THETA} 
\end{equation}
 
{\it Zero means:}\\

 We split up the term for the zero means as follows:
 \begin{equation}
 E_{0}\big[T_{\tau}(X)^{2}\big]=E_{0}\big[T_{\tau}(X)^{2}1\{|X|\leqslant \zeta_{\tau}\}\big]+E_{0}\big[T_{\tau}(X)^{2}1\{|X| >\zeta_{\tau}\}\big],\label{THM_3.1_EQ_PETZ_1}
 \end{equation}
 where $\zeta_{\tau}=\sqrt{2\log\big(\frac{1}{\tau^{2a}}\big)}.$\\
 
 For the first term on the right hand side of (\ref{THM_3.1_EQ_PETZ_1}), denoting $g_1(a)=\frac{KM}{a(1-a)}$ and using Lemma \ref{LEM_MOMENT_INEQ}, we obtain,
 \begin{eqnarray}
  E_{0} T_{\tau}(X)^{2} 1\{|X|\leqslant \zeta_{\tau}\}
  &\leqslant& \frac{g_1(a)^2}{\sqrt{2\pi}} (\tau^{2a})^2 \int_{-\zeta_{\tau}}^{\zeta_{\tau}} x^2 e^{\frac{x^2}{2}}dx (1+o(1)) \mbox{ as } \tau \rightarrow 0 \nonumber\\
  &=& \frac{2g_1(a)^2}{\sqrt{2\pi}} (\tau^{2a})^2 \int_{0}^{\zeta_{\tau}} x^2 e^{\frac{x^2}{2}}dx (1+o(1)) \mbox{ as } \tau \rightarrow 0 \nonumber\\
  &\leqslant& \sqrt{\frac{2}{\pi}}g_1(a)^2(\tau^{2a})^2 \zeta_{\tau}\frac{1}{\tau^{2a}}(1+o(1)) \mbox{ as } \tau \rightarrow 0 \nonumber\\
  &\lesssim& \zeta_{\tau}\tau^{2a}\label{THM_3.1_EQ_PETZ_2}
 \end{eqnarray}
 
 while, for the second term using the fact that $|T_{\tau}(x)|\leqslant |x|$ for $x\in\mathbb{R}$ we have,
 \begin{eqnarray}
 E_{0} T_{\tau}(X)^{2} 1\{|X| > \zeta_{\tau}\}
 &\leqslant& 2 \int_{\zeta_{\tau}}^{\infty} x^2\phi(x)dx\nonumber\\
 &=& 2 \big[\zeta_{\tau}\phi(\zeta_{\tau})+(1-\Phi(\zeta_{\tau})) \big]\nonumber\\
 &\leqslant& 2\zeta_{\tau}\phi(\zeta_{\tau})+2\frac{\phi(\zeta_{\tau})}{\zeta_{\tau}}\nonumber\\
 &=& \sqrt{\frac{2}{\pi}}\zeta_{\tau}\tau^{2a}(1+o(1))\mbox{ as } \tau \rightarrow 0 \nonumber\\
 &\lesssim& \zeta_{\tau}\tau^{2a} \label{THM_3.1_EQ_PETZ_3}
 \end{eqnarray}
Combining equations (\ref{THM_3.1_EQ_PETZ_1}), (\ref{THM_3.1_EQ_PETZ_2}) and (\ref{THM_3.1_EQ_PETZ_3}), it follows that,
\begin{eqnarray}
\sum_{i:\theta_i=0}E_{\theta_{i}}\big(T_{\tau}(X_i)-\theta_{i} \big)^2
&\lesssim& (n-\tilde{p_n})\tau^{2a}\sqrt{\log\big(\frac{1}{\tau^{2a}}\big)} \quad\mbox{as }\tau \rightarrow 0 \label{THM_3.1_UB_ZERO_THETA}
\end{eqnarray}
Finally, on combining (\ref{THM_3.1_MAIN_DECOMPOSITION}), (\ref{THM_3.1_UB_NONZERO_THETA}) and (\ref{THM_3.1_UB_ZERO_THETA}), we have,
\begin{equation}
 \sum_{i=1}^{n} E_{\theta_{i}} \big(T_{\tau}(X_{i})-\theta_{i}\big)^2
 \lesssim \tilde{p_n}\log\big(\frac{1}{\tau^{2a}}\big)+ (n-\tilde{p_n})\tau^{2a}\sqrt{\log\big(\frac{1}{\tau^{2a}}\big)}
\end{equation}
The stated result now becomes immediate by observing that $\tilde{p_n}\leqslant p_n$. This completes the proof of Theorem \ref{THM_3.1}.

\end{proof}

\begin{lem}\label{POST_VAR_IDENTITY}
The posterior variance arising out of the general class of shrinkage priors (\ref{LAMBDA_PRIOR}) can be represented by the following identity:
 \begin{eqnarray}
  Var(\theta|x)
  &=&\frac{T_{\tau}(x)}{x} - \big(T_{\tau}(x) -x \big)^2 + x^2 \frac{\int_{0}^{\infty}\frac{1}{(1+t\tau^2)^{5/2}}t^{-a-1}L(t)e^{-\frac{x^2}{2(1+t\tau^2)}}dt}{\int_{0}^{\infty}\frac{1}{(1+t\tau^2)^{1/2}}t^{-a-1}L(t)e^{-\frac{x^2}{2(1+t\tau^2)}}dt}\label{VAR_IDNT_1}\\
   &=& \frac{T_{\tau}(x)}{x} - T_{\tau}^2(x) + x^2 \frac{\int_{0}^{\infty}\frac{(t\tau^2)^2}{(1+t\tau^2)^{5/2}}t^{-a-1}L(t)e^{-\frac{x^2}{2(1+t\tau^2)}}dt}{\int_{0}^{\infty}\frac{1}{(1+t\tau^2)^{1/2}}t^{-a-1}L(t)e^{-\frac{x^2}{2(1+t\tau^2)}}dt}\label{VAR_IDNT_2} 
 \end{eqnarray}
 which can be bounded from above by
 \begin{equation}
  Var(\theta|x) \leqslant 1 + x^2.\nonumber
 \end{equation}
\end{lem}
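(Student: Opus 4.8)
The plan is to exploit the conditional conjugacy already recorded just before \eqref{POST_MEAN_OG1}: given the shrinkage coefficient $\kappa=1/(1+\lambda^2\tau^2)$ one has $\theta\mid(x,\kappa)\sim N\big((1-\kappa)x,\,1-\kappa\big)$, so every posterior moment of $\theta$ can be read off from posterior moments of $\kappa$ (equivalently of $\lambda^2$) given $x$. First I would record the posterior law of $\lambda^2$: since marginally $X\mid(\lambda^2,\tau^2)\sim N(0,1+\lambda^2\tau^2)$, multiplying this density by the prior $\pi(\lambda^2)=K(\lambda^2)^{-a-1}L(\lambda^2)$ and writing $t=\lambda^2$ shows that the posterior density of $t$ given $x$ is proportional to $(1+t\tau^2)^{-1/2}t^{-a-1}L(t)e^{-x^2/(2(1+t\tau^2))}$, which is exactly the common denominator appearing in \eqref{VAR_IDNT_1}--\eqref{VAR_IDNT_2}. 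Under this law $\kappa=1/(1+t\tau^2)$, so using $(1+t\tau^2)^{-2}=\kappa^2$ and $t\tau^2/(1+t\tau^2)=1-\kappa$ the quantities $E(\kappa^2\mid x)$ and $E\big((1-\kappa)^2\mid x\big)$ become precisely the two integral ratios in the last terms of \eqref{VAR_IDNT_1} and \eqref{VAR_IDNT_2}.

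Next I would derive the two displayed identities by two equivalent routes. For \eqref{VAR_IDNT_1} I would apply the law of total variance, $Var(\theta\mid x)=E\big[Var(\theta\mid x,\kappa)\mid x\big]+Var\big[E(\theta\mid x,\kappa)\mid x\big]=E(1-\kappa\mid x)+x^2Var(\kappa\mid x)$; writing $E(1-\kappa\mid x)=T_{\tau}(x)/x$ (which is just a restatement of \eqref{POST_MEAN_OG2}) and $x^2Var(\kappa\mid x)=x^2E(\kappa^2\mid x)-\big(xE(\kappa\mid x)\big)^2=x^2E(\kappa^2\mid x)-(T_{\tau}(x)-x)^2$ produces \eqref{VAR_IDNT_1}. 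For \eqref{VAR_IDNT_2} I would instead compute $E(\theta^2\mid x)=E(1-\kappa\mid x)+x^2E\big((1-\kappa)^2\mid x\big)$ directly from the conditional normal moments and subtract $T_{\tau}^2(x)=\big(E(\theta\mid x)\big)^2$, giving $Var(\theta\mid x)=T_{\tau}(x)/x-T_{\tau}^2(x)+x^2E\big((1-\kappa)^2\mid x\big)$. The two forms then coincide, since after multiplying by $x^2$ their difference reduces to the algebraic identity $(1-E\kappa)^2-(E\kappa)^2=E\big((1-\kappa)^2\big)-E(\kappa^2)$, both sides equalling $1-2E\kappa$; I would present this reconciliation only briefly.

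Finally, for the upper bound I would read off \eqref{VAR_IDNT_2} probabilistically. Since $\kappa\in(0,1)$ almost surely, both $1-\kappa$ and $(1-\kappa)^2$ lie in $(0,1)$, so $T_{\tau}(x)/x=E(1-\kappa\mid x)\le 1$ and $E\big((1-\kappa)^2\mid x\big)\le 1$, while $-T_{\tau}^2(x)\le 0$. Dropping the nonpositive middle term therefore yields $Var(\theta\mid x)\le 1+x^2$ at once. I do not anticipate a genuine obstacle here; the only step demanding care is the change of variables of the first paragraph, namely verifying that $(1+t\tau^2)^{-1/2}t^{-a-1}L(t)e^{-x^2/(2(1+t\tau^2))}\,dt$ really is the unnormalised posterior of $\lambda^2$ and that the numerator integrands correspond to $\kappa^2$ and $(1-\kappa)^2$ against this measure. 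Once the dictionary $\kappa=1/(1+t\tau^2)$ is fixed, both identities and the bound are immediate, and the slowly varying structure of $L$ enters only to guarantee that the integrals converge.
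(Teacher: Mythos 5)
Your proposal is correct and follows essentially the same route as the paper: conditional normality of $\theta$ given $(x,\kappa)$, the law of total variance, the identification $T_{\tau}(x)/x=E(1-\kappa\mid x,\tau)$, and the translation of $E(\kappa^2\mid x)$ and $E\big((1-\kappa)^2\mid x\big)$ into the stated integral ratios via the posterior density of $\lambda^2$; your derivation of \eqref{VAR_IDNT_2} through $E(\theta^2\mid x)-T_{\tau}^2(x)$ is algebraically identical to the paper's use of $Var(\kappa\mid x)=Var(1-\kappa\mid x)$. Your explicit justification of the bound $Var(\theta\mid x)\leqslant 1+x^2$ (dropping $-T_{\tau}^2(x)\leqslant 0$ and bounding the two $\kappa$-moments by $1$) fills in what the paper dismisses as trivial, and is correct.
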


\begin{proof} By the law of iterated variance it follows that
\begin{eqnarray}
 Var(\theta|x)
 &=& E\big[Var(\theta|x,\kappa,\tau)\big]+ Var\big[E(\theta|x,\kappa,\tau)\big]\nonumber\\
 &=& E\big[(1-\kappa)|x,\tau\big]+Var\big[x(1-\kappa)|x,\tau)\big]\nonumber\\
 &=& E\big[(1-\kappa)|x,\tau\big]+ x^2 Var\big[\kappa|x,\tau)\big]\nonumber\\
 &=& E\big[(1-\kappa)|x,\tau\big]+ x^2 E\big[\kappa^2|x,\tau\big] - x^2E^2\big[\kappa|x,\tau\big]\nonumber\\
 &=& \frac{T_{\tau}(x)}{x} - \big(T_{\tau}(x) -x \big)^2 + x^2 \frac{\int_{0}^{\infty}\frac{1}{(1+t\tau^2)^{5/2}}t^{-a-1}L(t)e^{-\frac{x^2}{2(1+t\tau^2)}}dt}{\int_{0}^{\infty}\frac{1}{(1+t\tau^2)^{1/2}}t^{-a-1}L(t)e^{-\frac{x^2}{2(1+t\tau^2)}}dt}\nonumber
\end{eqnarray}
 which can equivalently be represented as by the following identity as well:
 \begin{eqnarray}
 Var(\theta|x)
 &=& E\big[(1-\kappa)|x,\tau\big]+ x^2 E\big[(1-\kappa)^2|x,\tau\big] - x^2E^2\big[1-\kappa|x,\tau\big]\nonumber\\
 &=& \frac{T_{\tau}(x)}{x} - T_{\tau}^2(x) + x^2 \frac{\int_{0}^{\infty}\frac{(t\tau^2)^2}{(1+t\tau^2)^{5/2}}t^{-a-1}L(t)e^{-\frac{x^2}{2(1+t\tau^2)}}dt}{\int_{0}^{\infty}\frac{1}{(1+t\tau^2)^{1/2}}t^{-a-1}L(t)e^{-\frac{x^2}{2(1+t\tau^2)}}dt}.\nonumber
 \end{eqnarray}
 That $Var(\theta|x) \leqslant 1 + x^2$ now follows trivially from the above identities.
\end{proof}

\begin{lem}\label{LEMMA_IMPORTANT_INTEGRAL}
 Let us define
 \begin{equation}
  J(x,\tau)=x^2 \frac{\int_{0}^{\infty}\frac{(t\tau^2)^2}{(1+t\tau^2)^{5/2}}t^{-a-1}L(t)e^{-\frac{x^2}{2(1+t\tau^2)}}dt}{\int_{0}^{\infty}\frac{1}{(1+t\tau^2)^{1/2}}t^{-a-1}L(t)e^{-\frac{x^2}{2(1+t\tau^2)}}dt}, x\in\mathbb{R} \mbox{ and } \tau\in(0,1),\nonumber
 \end{equation}
 where the function $L(\cdot)$ is already defined in (\ref{LAMBDA_PRIOR}) and satisfies Assumption \ref{ASSUMPTION_LAMBDA_PRIOR}, with $a\in[0.5,1)$. Then, for each $x\in\mathbb{R}$ and every $0<\tau<1$, the function $J(\cdot,\cdot)$ is bounded above by,
 \begin{equation}
  J(x,\tau) \leqslant 2KMe^{\frac{x^2}{2}}\tau^{2a}\big(1+o(1)),\label{CRUCIAL_UB_VAR}
 \end{equation}
 where the $o(1)$ term is independent of $x$, and depends only on $\tau^2$ such that the term $(1+o(1))$ in (\ref{CRUCIAL_UB_VAR}) is positive for any $0<\tau<1$, and $\lim_{\tau\rightarrow 0}o(1)=0$.

%
\end{lem}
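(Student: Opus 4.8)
The plan is to treat $J(x,\tau)$ as the ratio of its two integrals and to bound the numerator from above and the denominator from below, arranging matters so that the factor $\tau^{2a}$ comes out of the numerator and the factor $e^{x^2/2}$ comes out of the reciprocal of the denominator. The one genuine difficulty is the explicit prefactor $x^2$: a crude estimate such as $(1-\kappa)^2\le(1-\kappa)$ followed by Lemma \ref{LEM_MOMENT_INEQ} would bound $J$ by $x^2\cdot\frac{KM}{a(1-a)}e^{x^2/2}\tau^{2a}(1+o(1))$ and leave a spurious $x^2$. Hence I would instead keep the Gaussian factor $e^{-\frac{x^2}{2(1+t\tau^2)}}$ intact throughout and exploit the fact that $x^2$ is tamed by it. This is exactly the place where the hypothesis $a\ge\tfrac12$ enters.

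For the numerator, first I would substitute $v=t\tau^2$, which rewrites it as $x^2\tau^{2a}\int_0^\infty v^{1-a}(1+v)^{-5/2}L(v/\tau^2)e^{-\frac{x^2}{2(1+v)}}\,dv$, displaying the factor $\tau^{2a}$ explicitly. Bounding $L\le M$ by Assumption \ref{ASSUMPTION_LAMBDA_PRIOR}, it then suffices to show the purely numerical estimate $x^2\int_0^\infty v^{1-a}(1+v)^{-5/2}e^{-\frac{x^2}{2(1+v)}}\,dv\le 2$ uniformly in $x$. Using $v^{1-a}\le(1+v)^{1-a}$ (valid since $1-a>0$) and the change of variable $r=1+v$, this is at most $x^2\int_1^\infty r^{-\frac32-a}e^{-\frac{x^2}{2r}}\,dr$. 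Now the assumption $a\ge\tfrac12$ gives $r^{-\frac32-a}\le r^{-2}$ for every $r\ge1$, and the substitution $w=\frac{x^2}{2r}$ evaluates $x^2\int_1^\infty r^{-2}e^{-\frac{x^2}{2r}}\,dr=2(1-e^{-x^2/2})\le 2$. Thus the numerator is at most $2M\tau^{2a}$, with no residual dependence on $x$.

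For the denominator I would use $e^{-\frac{x^2}{2(1+t\tau^2)}}\ge e^{-x^2/2}$ to factor out $e^{-x^2/2}$, leaving $\int_0^\infty(1+t\tau^2)^{-1/2}t^{-a-1}L(t)\,dt$. Since the integrand is dominated by $t^{-a-1}L(t)$, which is integrable with $\int_0^\infty t^{-a-1}L(t)\,dt=1/K$ (this is simply the normalization of the prior (\ref{LAMBDA_PRIOR})), dominated convergence gives $\int_0^\infty(1+t\tau^2)^{-1/2}t^{-a-1}L(t)\,dt=\frac1K(1+o(1))$, with an $o(1)$ that depends only on $\tau^2$ and vanishes as $\tau\to0$. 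Hence the denominator is at least $e^{-x^2/2}\frac1K(1+o(1))$.

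Dividing the two bounds yields $J(x,\tau)\le 2KM\,e^{x^2/2}\tau^{2a}(1+o(1))$, where the $o(1)$ is free of $x$ and the factor $(1+o(1))$ is positive for $0<\tau<1$, exactly as stated. I expect the main obstacle to be pinning down the sharp constant $2$ uniformly over both $x\in\mathbb{R}$ and $a\in[\tfrac12,1)$: this is precisely the step where $a\ge\tfrac12$ is indispensable, since it is what permits the comparison $r^{-\frac32-a}\le r^{-2}$ and the clean evaluation $x^2\int_1^\infty r^{-2}e^{-\frac{x^2}{2r}}\,dr=2(1-e^{-x^2/2})$. A secondary point to monitor is that the identification of the denominator constant with $1/K$, together with the finiteness of the integrals near the origin, rests on the propriety of the prior, i.e.\ on $t^{-a-1}L(t)$ being integrable at $0$.
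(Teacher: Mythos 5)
Your proof is correct and is essentially the paper's own argument: both bound $L\leqslant M$ in the numerator, factor $e^{x^2/2}$ out of the denominator via $e^{-x^2/2(1+t\tau^2)}\geqslant e^{-x^2/2}$ and identify the remaining denominator integral as $K^{-1}(1+o(1))$ by dominated convergence, and both exploit $a\geqslant\tfrac12$ through a power comparison that reduces the numerator integral to $2(1-e^{-x^2/2})\leqslant 2$. Your two substitutions ($v=t\tau^2$ followed by $w=x^2/(2r)$) compose to the paper's single substitution $u=x^2/(1+t\tau^2)$, and your inequalities $v^{1-a}\leqslant(1+v)^{1-a}$ and $r^{-3/2-a}\leqslant r^{-2}$ are exactly the paper's bounds $(1-u/x^2)^{1-a}\leqslant 1$ and $u^{a-1/2}\leqslant(x^2)^{a-1/2}$ written in different coordinates.
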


\begin{proof}
First observe that, for each fixed $\tau\in (0,1)$, the function $J(x,\tau)$ is symmetric in $x$, and so is the stated upper bound in (\ref{CRUCIAL_UB_VAR}). Moreover, $J(0,\tau)=0$, for any $0<\tau<1$. Thus, the stated result is vacuously true when $x=$ and $0<\tau<1$. Therefore, it will be enough to prove that (\ref{CRUCIAL_UB_VAR}) holds when $x>0$. So, let us assume that $x>0$.\\

Note that
 \begin{eqnarray}
J(x,\tau)
&\leqslant& Mx^2e^{\frac{x^2}{2}} \frac{\int_{0}^{\infty}\frac{(t\tau^2)^2}{(1+t\tau^2)^{5/2}}t^{-a-1}e^{-\frac{x^2}{2(1+t\tau^2)}}dt}{\int_{0}^{\infty}\frac{1}{(1+t\tau^2)^{1/2}}t^{-a-1}L(t)dt}\nonumber\\
&=& KMx^2e^{\frac{x^2}{2}} \int_{0}^{\infty}\frac{(t\tau^2)^2}{(1+t\tau^2)^{5/2}}t^{-a-1}e^{-\frac{x^2}{2(1+t\tau^2)}}dt \big(1+o(1)\big)\label{VAR_NONZERO_2}
\end{eqnarray}
where in the preceding chain of inequalities we use the facts that the function $L(\cdot)$ is bounded above by the constant $M > 0$ and $\int_{0}^{\infty}\frac{1}{(1+t\tau^2)^{1/2}}t^{-a-1}L(t)dt=\int_{0}^{\infty}t^{-a-1}L(t)dt\big(1+o(1)\big)= K^{-1}\big(1+o(1)\big)$ as $\tau \rightarrow 0$ (which follows from Lebesgue's Dominated Convergence Theorem). Clearly, the $o(1)$ term does not involve $x$ and depends only on $\tau^2$ such that $\lim_{\tau\rightarrow 0}o(1)=0$. It is also evident that the term $(1+o(1))$ in (\ref{CRUCIAL_UB_VAR}) is always positive for any $0<\tau<1$.\\

Consider now the following variable transformation in the integral in (\ref{VAR_NONZERO_2}):
\begin{equation}
 u=\frac{x^2}{1+t\tau^2}.\nonumber
\end{equation}

Then we have,
\begin{eqnarray}
J(x,\tau)
&\leqslant&  KMx^2e^{\frac{x^2}{2}} \int_{0}^{x^2} \bigg(1-\frac{u}{x^2}\bigg)^2\bigg(\frac{u}{x^2}\bigg)^{1/2}\bigg(\frac{1}{\tau^2}\frac{x^2}{u}\big(1-\frac{u}{x^2}\big)\bigg)^{-a-1}e^{-u/2}\frac{x^2}{\tau^2u^2}du\big(1+o(1)\big)\nonumber\\
&=&  KM(x^2)^{1/2-a}e^{\frac{x^2}{2}}\tau^{2a}\int_{0}^{x^2} \bigg(1-\frac{u}{x^2}\bigg)^{1-a}u^{a-1/2}e^{-u/2}du\big(1+o(1)\big)\label{VAR_NONZERO_3}
\end{eqnarray}

Note that $0 < u < x^2 \Longrightarrow 0 < 1-\frac{u}{x^2} < 1$. Hence, $\big(1-\frac{u}{x^2}\big)^{1-a}<1$ as $0<a<1$. Also, as $\frac{1}{2}\leqslant a <1$, we have, $u^{a-1/2}\leqslant (x^2)^{a-1/2}$.\\

Therefore using (\ref{VAR_NONZERO_3}), we obtain,
\begin{eqnarray}\label{VAR_NONZERO_4}
J(x,\tau)
&\leqslant&  KMe^{\frac{x^2}{2}}\tau^{2a}\int_{0}^{x^2} e^{-u/2}du\big(1+o(1)\big),\nonumber\\
&=& 2KMe^{\frac{x^2}{2}}\tau^{2a} \big(1-e^{-x^2/2}\big)\big(1+o(1)\big)\nonumber\\
&\leqslant& 2KMe^{\frac{x^2}{2}}\tau^{2a}\big(1+o(1)\big),\nonumber
\end{eqnarray}
thereby completing the proof of Lemma \ref{LEMMA_IMPORTANT_INTEGRAL}.
\end{proof}

\paragraph{Proof of Theorem \ref{THM_3.2}}
\begin{proof}
 Suppose that $X \sim \mathcal{N}_{n}(\theta,I_{n})$, $\theta \in l_{0}[p_{n}]$ and $\tilde{p_n}=\#\{i:\theta_{i}\neq 0\}$. Then $\tilde{p_n} \leqslant p_n$. Let $\zeta_{\tau}=\sqrt{2\log\big(\frac{1}{\tau^{2a}}\big)}$.\\
 
 {\it Nonzero means:}\newline

 By applying the same reasoning as in the proof of Lemma \ref{LEMMA_MOST_IMP} to the final term of $Var(\theta|x)$ in (\ref{VAR_IDNT_1}), there exists a non-negative real-valued function $\tilde{h}(x,\tau)$ such that $Var(\theta|x) \leqslant \tilde{h}(x,\tau)$, where $\tilde{h}(x,\tau) \rightarrow 1 $ as $x \rightarrow \infty$ for any fixed $\tau \in (0,1)$. If $\tau \rightarrow 0$, the function $\tilde{h}(x,\tau)$ satisfies the following for any $c > 1$:
 \begin{eqnarray}
  \lim_{\tau \downarrow 0}\sup_{\mid x \mid > \rho \zeta_{\tau}} \tilde{h}(x,\tau) = 1 \mbox{ for all } \rho > c. \nonumber
 \end{eqnarray}

Hence $Var(\theta|x) \lesssim 1,$ for any $|x| > \zeta_{\tau}$ as $\tau \rightarrow 0$.\\

Let us now consider the case $|x| \leqslant \zeta_{\tau}$. Using the identity (\ref{VAR_IDNT_2}) and then applying Lemma \ref{LEMMA_IMPORTANT_INTEGRAL}, we obtain:
\begin{equation}
 Var(\theta|x)
\leqslant 1+ 2KMe^{\frac{x^2}{2}}\tau^{2a}\big(1+o(1)\big),\nonumber
\end{equation}
where the $o(1)$ term above is independent of $x$, and depends only on $\tau^2$ such that the term $(1+o(1))$ is always positive for any $0<\tau<1$, and $\lim_{\tau\rightarrow 0}o(1)=0$.\\

Note that $e^{\frac{x^2}{2}}\tau^{2a}\leqslant 1$ when $|x| \leqslant \zeta_{\tau}$. Thus, $Var(\theta|x)\leqslant 1+2KM(1+o(1))$, for $|x| \leqslant \zeta_{\tau}$.\\

Therefore,
\begin{eqnarray}
 E_{\theta_{i}} Var(\theta_{i}|X_i)1\{|X_i|\leqslant \zeta_{\tau}\} \lesssim 1+2KM.\nonumber
\end{eqnarray}

Using the preceding discussion it therefore follows that for any $i$ such that $\theta_i\neq 0$,
\begin{eqnarray}
\begin{aligned}
&\quad E_{\theta_{i}} Var(\theta_{i}|X_i)\nonumber\\
& =E_{\theta_{i}}\big[Var(\theta_{i}|X_i)1\{|X_i|> \zeta_{\tau}\}\big]+E_{\theta_{i}}\big[Var(\theta_{i}|X_i)1\{|X_i|\leqslant \zeta_{\tau}\}\big]\nonumber\\
&\lesssim 1+KM \mbox{ as } \tau \rightarrow 0.\nonumber
\end{aligned}
\end{eqnarray}

Thus, for $\frac{1}{2}\leqslant a <1$, we have,
\begin{equation}
 \sum_{i:\theta_{i}\neq 0}E_{\theta_{i}} Var(\theta_{i}|X_i) \lesssim \tilde{p_n} \quad \mbox{as } \tau \rightarrow 0.\label{VAR_NONZERO_CONTRIBUTION}
\end{equation}

{\it Zero means:}\\
 
 By the bound $Var(\theta|x) \leqslant 1+x^2$ in Lemma \ref{POST_VAR_IDENTITY}, we find that,
 \begin{eqnarray}
  E_{0} Var(\theta|X)1_{\{|X| > \zeta_{\tau}\}}
  &\leqslant& 2 \int_{\zeta_{\tau}}^{\infty} (1+x^2)\frac{1}{\sqrt{2\pi}}e^{-\frac{x^2}{2}}dx\nonumber\\
  &\leqslant& \frac{\tau^{2a}}{\zeta_{\tau}} + \zeta_{\tau}\tau^{2a}.\label{VAR_THETA_0_1}
 \end{eqnarray}
 
Again when $|x| \leqslant \zeta_{\tau}$, we consider the upper bound $Var(\theta|x) \leqslant \frac{T_{\tau}(x)}{x} + J(x,\tau)$ as obtained from Lemma \ref{POST_VAR_IDENTITY}, where the term $J(x,\tau)$ is already defined in Lemma \ref{LEMMA_IMPORTANT_INTEGRAL}. Note that $\frac{T_{\tau}(x)}{x}=E(1-\kappa|x,\tau)$. Therefore, using Lemma \ref{LEM_MOMENT_INEQ}, it follows that:
\begin{equation}
\int_{-\zeta_{\tau}}^{\zeta_{\tau}} \frac{T_{\tau}(x)}{x}\frac{1}{\sqrt{2\pi}}e^{-\frac{x^2}{2}}dx 
\lesssim \tau^{2a}\zeta_{\tau}.\label{VAR_THETA_0_2}
\end{equation}

Similarly, using Lemma \ref{LEMMA_IMPORTANT_INTEGRAL} we obtain,
\begin{equation}
\int_{-\zeta_{\tau}}^{\zeta_{\tau}} J(x,\tau)\frac{1}{\sqrt{2\pi}}e^{-\frac{x^2}{2}}dx 
\lesssim \tau^{2a}\zeta_{\tau}.\label{VAR_THETA_0_3}
\end{equation}

Therefore, using (\ref{VAR_THETA_0_1}), (\ref{VAR_THETA_0_2}) and (\ref{VAR_THETA_0_3}), we have:
\begin{equation}
 \sum_{i:\theta_{i}=0} E_{0} Var(\theta_i|X_i) \lesssim (n-\tilde{p_n})\tau^{2a}\zeta_{\tau}.\label{VAR_ZERO_CONTRIBUTION} 
\end{equation}
Combining equations (\ref{VAR_NONZERO_CONTRIBUTION}) and (\ref{VAR_ZERO_CONTRIBUTION}), and taking supremum over $\theta_{0}\in l_{0}[p_{n}]$, it therefore follows that:
\begin{equation}
\sup_{\theta_{0}\in l_{0}[p_{n}]}E_{\theta_{0}} \sum_{i=1}^{n} Var(\theta_{0i}|X_i) \lesssim
  \tilde{p_n}+(n-\tilde{p_n})\tau^{2a}\sqrt{\log\big(\frac{1}{\tau^{2a}}\big)}.\nonumber
 \end{equation}
The result then follows immediately by noting that $\tilde{p_n}\leqslant p_n$ and $p_n=o(n)$.

\end{proof}

\begin{lem}\label{LEMMA_POST_VAR_LOWER_BOUND}
 Suppose the function $L(\cdot)$ given by (\ref{LAMBDA_PRIOR}) satisfies Assumption \ref{ASSUMPTION_LAMBDA_PRIOR} and is non-decreasing over $(0,\infty)$, with $a=\frac{1}{2}$. Let us define for fixed $y > 0$ and for fixed $k > 0$,
 \begin{equation}
 I_{k}= \int_{0}^{\infty} \frac{(t\tau^2)^{k-\frac{1}{2}}}{(1+t\tau^2)^{k}}t^{-3/2}L(t)e^{\frac{t\tau^2}{1+t\tau^2}y}dt. \nonumber
 \end{equation}
 Then,
 \begin{align*}
  I_{\frac{5}{2}} &\geqslant L(1)\tau\bigg[\frac{\tau}{y}\big(e^{y/2} - e^{\tau^2y}\big)+\frac{1}{\sqrt{2}y}\big(e^{y}-e^{y/2}\big) \bigg], \mbox{ for } \tau < \frac{1}{\sqrt{2}}\nonumber\\
  I_{\frac{1}{2}}  &\leqslant \tau\bigg[\frac{e^{\tau^2y}}{K\tau}+2Me^{\tau y}\big(\frac{1}{\tau}-\frac{1}{\sqrt{\tau}}\big)+2Me^{\frac{y}{2}}\big(\frac{1}{\sqrt{\tau}} - \sqrt{2} \big)+ \frac{2M\sqrt{2}}{y}\big(e^{y} - e^{\frac{y}{2}}\big)\bigg], \mbox{ for } \tau < \frac{1}{2}\nonumber\\
  I_{\frac{3}{2}} &\leqslant M\tau\bigg[e^{\tau^2y}\tau +2e^{\frac{y}{2}}\big(\frac{1}{\sqrt{2}}-\tau \big)+\frac{\sqrt{2}}{y}\big(e^{y}-e^{\frac{y}{2}}\big)\bigg], \mbox{ for } \tau < \frac{1}{\sqrt{2}}\nonumber\\
  I_{\frac{1}{2}} &\geqslant L(1)\tau\bigg[e^{\tau^2y}\big(\frac{1}{\tau}-\frac{1}{\sqrt{\tau}}\big)+\frac{\sqrt{2}}{y}\big(e^{y}-e^{\tau y}\big)+\frac{1}{2y}\big(e^{y}-e^{\frac{y}{2}}\big)\bigg], \mbox{ for } \tau < \frac{1}{2}.\nonumber
 \end{align*}
 \end{lem}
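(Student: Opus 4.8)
The plan is to reduce each $I_k$ to an elementary integral over $(0,1)$ by two successive changes of variable, and then to read off the four bounds by partitioning the resulting range into subintervals on which either the monomial factor or the exponential factor is replaced by its extreme value while the other is integrated in closed form. First I would substitute $u=t\tau^2$, which pulls out the common prefactor $\tau$ and gives
\[
I_k=\tau\int_0^\infty\frac{u^{k-2}}{(1+u)^k}\,L\!\Big(\tfrac{u}{\tau^2}\Big)\,e^{\frac{u}{1+u}y}\,du .
\]
Then the substitution $w=\frac{u}{1+u}$ (so that $u=\frac{w}{1-w}$ and $du=(1-w)^{-2}dw$) collapses all the $(1+u)$ powers, since $\frac{u^{k-2}}{(1+u)^k}\,du=w^{k-2}\,dw$, yielding the clean form
\[
I_k=\tau\int_0^1 w^{k-2}\,L\!\Big(\tfrac{w}{\tau^2(1-w)}\Big)\,e^{wy}\,dw ,
\]
so that $k=\tfrac12,\tfrac32,\tfrac52$ correspond to the weights $w^{-3/2},w^{-1/2},w^{1/2}$. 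This representation is the backbone of all four estimates.

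For the two lower bounds (on $I_{5/2}$ and $I_{1/2}$) I would exploit the hypothesis that $L$ is non-decreasing: since $\frac{w}{\tau^2(1-w)}\geq \frac{w}{\tau^2}\geq 1$ whenever $w\geq\tau^2$, one gets $L(\frac{w}{\tau^2(1-w)})\geq L(1)$ on $[\tau^2,1]$. Discarding the contribution of $[0,\tau^2]$ then leaves $I_k\geq L(1)\tau\int_{\tau^2}^1 w^{k-2}e^{wy}\,dw$, and I would split $[\tau^2,1]$ at the points $\tau$ and $\tfrac12$ (legitimate because $\tau<\tfrac12$ forces $\tau^2<\tau<\tfrac12<1$). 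On each subinterval I bound the monomial $w^{k-2}$ below by its value at the appropriate endpoint and integrate $e^{wy}$ exactly via $\int e^{wy}\,dw=\frac1y e^{wy}$, or conversely bound $e^{wy}$ below by $e^{(\text{left endpoint})y}$ and integrate $w^{k-2}$ exactly; which factor to freeze on each piece is dictated by matching the stated terms, and this produces the differences $e^{y}-e^{y/2}$, $e^{y/2}-e^{\tau^2 y}$, and so on.

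For the two upper bounds I would instead use the crude bound $L\leq M$ from Assumption~\ref{ASSUMPTION_LAMBDA_PRIOR} on the bulk of the range and split as above. For $I_{3/2}$ the weight $w^{-1/2}$ is integrable at the origin, so $L\leq M$ may be applied on all of $(0,1)$. The delicate case is $I_{1/2}$, whose weight $w^{-3/2}$ is \emph{not} integrable at $w=0$ under the crude bound; there I would isolate the piece $[0,\tau^2]$, use only $e^{wy}\leq e^{\tau^2 y}$ on it, and then reverse the substitutions to rewrite $\tau\int_0^{\tau^2}w^{-3/2}L(\cdots)\,dw$ in the $t$-variable, where (with $a=\tfrac12$) it is controlled by the full normalizing integral $\int_0^\infty t^{-a-1}L(t)\,dt=K^{-1}$ coming from (\ref{LAMBDA_PRIOR}). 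This is exactly the origin of the anomalous first term $e^{\tau^2 y}/(K\tau)$.

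I expect the main obstacle to be precisely this near-origin analysis for $I_{1/2}$: it is the only place where the mere boundedness $L\leq M$ fails and one must instead invoke the propriety of the prior, i.e.\ the finiteness of $K^{-1}$, together with the genuine decay of $L$ near $0$ that makes $t^{-a-1}L(t)$ integrable. The rest is careful but routine bookkeeping, namely choosing the breakpoints $\tau^2,\tau,\tfrac12,1$ and deciding, interval by interval, whether to freeze the power of $w$ or the exponential so as to land on the exact expressions claimed; the hypotheses $\tau<\tfrac1{\sqrt2}$ and $\tau<\tfrac12$ enter only to keep these breakpoints correctly ordered.
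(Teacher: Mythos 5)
Your proposal is correct and takes essentially the same route as the paper's own proof: the paper performs your two substitutions in one step ($u=\tfrac{t\tau^2}{1+t\tau^2}$), then likewise uses $L\geqslant L(1)$ on $[\tau^2,1]$ (monotonicity) for the lower bounds, $L\leqslant M$ for the upper bounds with the same breakpoints $\tau^2,\tau,\tfrac{1}{2}$ borrowed from Lemma A.1 of \citet{PKV2014}, and handles the non-integrable near-origin piece of $I_{\frac{1}{2}}$ exactly as you do, by reverting to the $t$-variable and invoking the normalizing integral $\int_0^\infty t^{-3/2}L(t)\,dt=K^{-1}$, which is the source of the term $e^{\tau^2 y}/(K\tau)$.
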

 \begin{proof}
 Note that since $L$ is nondecreasing over $(0,\infty)$, $L(t) \geqslant L(1)$ for all $t \geqslant 1$. Therefore,
 \begin{eqnarray}
  I_{\frac{5}{2}}
  &=& \int_{0}^{\infty} \frac{(t\tau^2)^2}{(1+t\tau^2)^{5/2}}t^{-3/2}L(t)e^{\frac{t\tau^2}{1+t\tau^2}y}dt\nonumber\\
  &=&\frac{1}{\tau}\int_{0}^{\infty}\big(\frac{t\tau^2}{1+t\tau^2×}\big)^{5/2}t^{-2}L(t)e^{\frac{t\tau^2}{1+t\tau^2}y}dt\nonumber\\
  &\geqslant& \frac{L(1)}{\tau}\int_{1}^{\infty}\big(\frac{t\tau^2}{1+t\tau^2×}\big)^{5/2}t^{-2}e^{\frac{t\tau^2}{1+t\tau^2}y}dt\label{INEQ_1}
 \end{eqnarray}
 Now putting $u=\frac{t\tau^2}{1+t\tau^2}$ in (\ref{INEQ_1}) we obtain,
 \begin{align*}
  I_{\frac{5}{2}}
  &\geqslant L(1)\tau \int_{\frac{\tau^2}{1+\tau^2}}^{1}u^{1/2}e^{uy}du\nonumber\\
  &\geqslant L(1)\tau \int_{\tau^2}^{1}u^{1/2}e^{uy}du [\mbox{since } \tau^2 < 1 ]\nonumber\\
  &=L(1)\tau\bigg[\frac{\tau}{y}\big(e^{y/2} - e^{\tau^2y}\big)+\frac{1}{\sqrt{2}y}\big(e^{y}-e^{y/2}\big) \bigg], \mbox{ for } \tau < \frac{1}{\sqrt{2}}\nonumber
 \end{align*}
where the last equality follows using the same set arguments in the proof of Lemma A.1 of \citet{PKV2014}.\\
  
Next observe that
\begin{eqnarray}
 I_{\frac{1}{2}}
 &=& \int_{0}^{\infty} \frac{1}{(1+t\tau^2)^{1/2}}t^{-3/2}L(t)e^{\frac{t\tau^2}{1+t\tau^2}y}dt\nonumber\\
 &=& \frac{1}{\tau}\int_{0}^{\infty}\big(\frac{t\tau^2}{1+t\tau^2}\big)^{1/2}t^{-2}L(t)e^{\frac{t\tau^2}{1+t\tau^2}y}dt\nonumber\\
 &=& \tau \int_{0}^{1}u^{-3/2}L\big(\frac{1}{\tau^2}\frac{u}{1-u}\big)e^{uy}du\mbox{ }[\mbox{putting }u=\frac{t\tau^2}{1+t\tau^2}] \nonumber\\
 &=& \tau \bigg[\int_{0}^{\tau^2}u^{-3/2}L\big(\frac{1}{\tau^2}\frac{u}{1-u}\big)e^{uy}du+\int_{\tau^2}^{1}u^{-3/2}L\big(\frac{1}{\tau^2}\frac{u}{1-u}\big)e^{uy}du\bigg]\label{INEQ_2}
\end{eqnarray}
Now observe that $e^{uy} \leqslant e^{\tau^2 y}$ for all $u \leqslant \tau^2$. Using this fact and applying the change of variable $t=\frac{1}{\tau^2}\frac{u}{1-u}$ in the first integral on the right hand side of (\ref{INEQ_2}) we obtain,
\begin{eqnarray}
 \int_{0}^{\tau^2}u^{-3/2}L\big(\frac{1}{\tau^2}\frac{u}{1-u}\big)e^{uy}du
 &\leqslant& e^{\tau^2 y} \int_{0}^{\tau^2}u^{-3/2}L\big(\frac{1}{\tau^2}\frac{u}{1-u}\big)du\nonumber\\
 &=& \frac{e^{\tau^2 y}}{\tau} \int_{0}^{\frac{1}{1-\tau^2}}\frac{t^{-3/2}}{\sqrt{1+t\tau^2}}L(t)dt\nonumber\\
 &\leqslant& \frac{e^{\tau^2 y}}{\tau} \int_{0}^{\infty}t^{-3/2}L(t)dt \mbox{ } [\mbox{since }\frac{1}{\sqrt{1+t\tau^2}} \leqslant 1]\nonumber\\
 &=& \frac{K^{-1}e^{\tau^2 y}}{\tau}\label{INEQ_3}
\end{eqnarray}
For the second integral on the right hand side of (\ref{INEQ_2}), we observe that the function $L$ is bounded by the constant $M > 0$. Using this observation and then apply the same arguments given in the proof of Lemma A.1 of \citet{PKV2014}, we obtain,
\begin{equation}
 \int_{\tau^2}^{1}u^{-3/2}L\big(\frac{1}{\tau^2}\frac{u}{1-u}\big)e^{uy}du
\leqslant 2M \bigg[e^{\tau y}\big(\frac{1}{\tau}-\frac{1}{\sqrt{\tau}}\big)+ e^{\frac{y}{2}}\big(\frac{1}{\sqrt{\tau}} - \sqrt{2}\big) + \frac{\sqrt{2}}{y}\big(e^{y} - e^{\frac{y}{2}}\big)\bigg] \mbox{} \mbox{ for } \tau < \frac{1}{2}\label{INEQ_4}.
\end{equation}
(\ref{INEQ_2}), (\ref{INEQ_3}) and (\ref{INEQ_4}) together immediately give the stated upper bound on $I_{\frac{1}{2}}$.\\

Again note that $ \tau^2 < u < 1 \Longrightarrow \frac{1}{\tau^2}\frac{u}{1-u} > \frac{1}{1-\tau^2} $ which is strictly greater than 1. Hence $L\big(\frac{1}{\tau^2}\frac{u}{1-u}\big) \geqslant L\big(\frac{1}{1-\tau^2}\big) \geqslant L(1) $ as $L$ is nondecreasing. Using this observation and (\ref{INEQ_2}) and then applying the same reasoning given in the proof of Lemma A.1 of \citet{PKV2014}, we obtain,
\begin{eqnarray}
 I_{\frac{1}{2}}
 &\geqslant& \tau \int_{\tau^2}^{1}u^{-3/2}L\big(\frac{1}{\tau^2}\frac{u}{1-u}\big)e^{uy}du\nonumber\\
 &\geqslant& L(1)\tau \int_{\tau^2}^{1}u^{-3/2}e^{uy}du\nonumber\\
 &=& L(1)\tau\bigg[e^{\tau^2y}\big(\frac{1}{\tau}-\frac{1}{\sqrt{\tau}}\big)+\frac{\sqrt{2}}{y}\big(e^{y}-e^{\tau y}\big)+\frac{1}{2y}\big(e^{y}-e^{\frac{y}{2}}\big)\bigg], \mbox{ for } \tau < \frac{1}{2}.\nonumber
\end{eqnarray}

The stated upper bound for $I_{3/2}$ follows immediately by noting that $L$ is bounded by the constant $M > 0$ and subsequently by change the variable $u=\frac{t\tau^2}{1+t\tau^2}$ followed by the same set of arguments given in the proof of Lemma A.1 of \citet{PKV2014}. This completes the proof of Lemma \ref{LEMMA_POST_VAR_LOWER_BOUND}.
 \end{proof}

 \paragraph{Proof of Theorem \ref{THM_3.5}}
 \begin{proof}
  From (\ref{VAR_IDNT_2}) we have,
 \begin{eqnarray}
   Var(\theta|x)
  &\geqslant& x^2E\bigg[(1-\kappa)^2 | x,\tau\bigg] - x^2E^2\bigg[(1-\kappa) | x,\tau\bigg] \nonumber\\
  &=& x^2 \bigg[\frac{\int_{0}^{\infty}\frac{(t\tau^2)^2}{(1+t\tau^2)^{5/2}}t^{-3/2}L(t)e^{-\frac{x^2}{2(1+t\tau^2)}}dt}{\int_{0}^{\infty}\frac{1}{(1+t\tau^2)^{1/2}}t^{-3/2}L(t)e^{-\frac{x^2}{2(1+t\tau^2)}}dt}-\quad \bigg(\frac{\int_{0}^{\infty}\frac{t\tau^2}{(1+t\tau^2)^{3/2}}t^{-3/2}L(t)e^{-\frac{x^2}{2(1+t\tau^2)}}dt}{\int_{0}^{\infty}\frac{1}{(1+t\tau^2)^{1/2}}t^{-3/2}L(t)e^{-\frac{x^2}{2(1+t\tau^2)}}dt}\bigg)^2 \bigg]\nonumber\\
&=& 2y \bigg[\frac{I_{\frac{5}{2}}}{I_{\frac{1}{2}}} - \bigg(\frac{I_{\frac{3}{2}}}{I_{\frac{1}{2}}}\bigg)^2 \bigg]\nonumber
\end{eqnarray}
 by putting $y=\frac{x^2}{2}$ and rest of the proof follows by applying Lemma \ref{LEMMA_POST_VAR_LOWER_BOUND} and the same set of arguments given in the proof of Theorem 3.4 of \citet{PKV2014}.  
 \end{proof}
 
\section*{Acknowledgements}
 The authors would like to thank Professor Jayanta Kumar Ghosh for letting them aware about the recent work of \citet{PKV2014} on the posterior contraction properties of the horseshoe prior.

\bibliographystyle{apalike}
\bibliography{reference_contraction}
\end{document}